\newtheorem{thm}{Theorem}[section]
\newtheorem{cor}[thm]{Corollary}
\newtheorem{lem}[thm]{Lemma}
\newtheorem{prop}[thm]{Proposition}
\newtheorem{rem}[thm]{Remark}
\numberwithin{equation}{section}
\renewcommand{\thefootnote}
\newcommand\co{\operatorname{co}}
\author {Amel Hammi \& B\'echir Amri   }
\title{  Dunkl--Schr\"{o}dinger operators}
\date{ }
\begin{document}
 \maketitle
\begin{center}
   Universit\'{e} Tunis El Manar, Facult\'{e} des sciences de Tunis,\\ Laboratoire d'Analyse Math\'{e}matique
       et Applications,\\ LR11ES11, 2092 El Manar I, Tunisie.\\
     \textbf{ e-mail:} bechiramri69@gmail.com, hammiamel097@gmail.com
\end{center}
 \begin{abstract}
  In this paper, we consider the Schr\"{o}dinger operators $L_k=-\Delta_k+V$, where $\Delta_k$ is the Dunkl-Laplace operator and $V$ is a  non-negative potential  on $R^d$.
We establish that $L_k $ is essentially self-adjoint on $C_0^\infty$. In particular,
we develop a bounded $H^\infty$-calculus on $L^p$ spaces for  the Dunkl harmonic oscillator operator.
\\ \\
\\  \textbf{ Keywords}. Self-adjoint operator, Schr\"{o}dinger operator, Dunkl operators.
\\\textbf{ Mathematics Subject Classification }. Primary 47B25; 35J10. Secondary 	43A32.
 \end{abstract}
\section{Introduction  }
During the recent decades the ordinary Schr\"{o}dinger operators   $-\Delta+V$   have been generalized  in  a domains  where a   family of  a Laplace type operators are  given,  for example, in Heisenberg group \cite{LL}, nilpotent Lie groups \cite{HL}, and
spaces of homogeneous type \cite{BB}. In a similar way,  in the area of harmonic analysis, functional calculus for  self-adjoint operators and a number of important applications are developed \cite{DR, DM, Mc1}.
 \par In this paper we consider Schr\"{o}dinger operators $L_k=-\Delta_k+V$ associated to the Dunkl Laplace operator on $\mathbb{R}^d$ given by
$\Delta_k=\sum_{j=1}^{^d}T_j^2$
 where $T_j$ are  a family of differential-difference operators
associated to a finite reflection group, which are called Dunkl
operators.  It is well known that Dunkl theory provides a generalization  of the   Fourier Analysis. There are many classical results  in  Fourier analysis that are extended  to Dunkl setting and the present work  fits within this framework.
Arise from the work of Simon \cite{SS}, we study the problem of essential selfadjointness of $L_k$ and the correspondent heat   semi group $e^{-tL_k}$.
We investigate the spectral theory,   complex analysis and theory    of  holomorphic functional calculus for  the  generator of an holomorphic semi group, as present in \cite{DR, DM, Mc1} to develop an $L^p$- boundedness of holomorphic functional calculus for  the Dunkl harmonic oscillator
$H_k=-\Delta_k+|x|^2$, where the  use  of the heat kernel  being the    most powerful tools.
\par The paper is outline as follows. In the next Section we give Backgrounds form Dunkl's theory. The Sections III and IV are devoted to study the  essential  selfadjointness  of the operators $\Delta_k$ and $L_k$. The Section V treat  the $L^p$- boundedness of holomorphic functional calculus for $H_k$.
 \section{Basics of the Dunkl  theory}
For details, we refer to \cite{D1, D2, J1, R2} and the references cited there.
 \par
$\mathbb{R}^d$ is equipped with a scalar product $\langle x,y\rangle=\sum_{j=1}^dx_jy_j$
 which induces the Euclidean norm $|x|=\langle x, x\rangle^{1/2}$.
\par Let $G\!\subset\!\text{O}(\mathbb{R}^d)$
be a finite reflection group associated to a reduced root system $R$
and $k:R\rightarrow[0,+\infty)$ be  a $G$--invariant function
(called multiplicity function).
Let $R^+$ be a positive root subsystem. The Dunkl operators \,$T_\xi$ on $\mathbb{R}^d$ are
the following $k$--de\-for\-ma\-tions of directional derivatives $\partial_\xi$
by difference operators\,:
$$\textstyle
T_\xi f(x)=\partial_\xi f(x)
+\sum_{\,\alpha\in R^+}\!k(\alpha)\,\langle\alpha,\xi\rangle\,
\frac{f(x)-f(\sigma_\alpha.\,x)}{\langle\alpha,\,x\rangle}\,,
$$
where  $\sigma_\alpha $ denotes the reflection
with respect to the hyperplane orthogonal to $\alpha$.
The Dunkl operators are antisymmetric
with respect to the measure $w_k(x)\,dx$
with density
$$\textstyle
w_k(x)=\,\prod_{\,\alpha\in R^+}|\,\langle\alpha,x\rangle\,|^{\,2\,k(\alpha)}\,.
$$
The operators $\partial_\xi$ and $T_\xi$
are intertwined by a Laplace--type operator
\begin{eqnarray*}\label{vk}
V_k\hspace{-.25mm}f(x)\,
=\int_{\mathbb{R}^d}\hspace{-1mm}f(y)\,d\nu_x(y),
\end{eqnarray*}
associated to a family of compactly supported probability measures
\,$\{\,\nu_x\,|\,x\!\in\!\mathbb{R}^d\hspace{.25mm}\}$\,.
Specifically, \,$\nu_x$ is supported in the the convex hull $\co(G.x)\,.$
\par For every $y\!\in\!\mathbb{C}^d$\!,
the simultaneous eigenfunction problem
\begin{equation*}
T_\xi f=\langle y,\xi\rangle\,f
\qquad\forall\;\xi\!\in\!\mathbb{R}^d
\end{equation*}
has a unique solution $f(x)\!=\!E_k(x,y)$
such that $E_k(0,y)\!=\!1$, called the Dunkl kernel and is given by
\begin{equation}\label{EV}
E_k(x,y)\,
=\,V(e^{\,\langle\lambda,\,.\,\rangle})(x)\,
=\int_{\mathbb{R}^d}\hspace{-1mm}e^{\,\langle z,y\rangle}\,d\nu_x(z)
\qquad\forall\;x\!\in\!\mathbb{R}^d.
\end{equation}
Furthermore this kernel has a holomorphic extension to $\mathbb{C}^d\times \mathbb{C}^d $
 and the following estimate hold\,: for  $ \;x, \;y\!\in\!\mathbb{C}^d,$
\begin{itemize}
\item[(ii)] $E_k(x,y)=E_k(y,x)$,
\item[(iii)] $E_k(\lambda x,y)=E_k(x,\lambda  y)$, for $\lambda\in \mathbb{C}$
\item[(iv)] $E_k(g. x,g.y)=E_k(x, y)$, for $g\in G$.
\end{itemize}
 In dimension $d\!=\!1$,
these functions can be expressed in terms of Bessel functions.
Specifically,
$$\textstyle
E_k(x,y)= \mathcal{J}_{k-\frac12}(xy)
+\frac{xy}{2\hspace{.25mm}k+1}\, \mathcal{J}_{k+\frac12}(xy)\,,
$$
where
$$\textstyle
 \mathcal{J}_\nu(z)\,=\;\Gamma(\nu\!+\!1)\;
{\displaystyle\sum\nolimits_{\,n=0}^{+\infty}}\;
\frac{(-1)^n}{n\hspace{.25mm}!\,\Gamma(\nu+n+1)}\;
\bigl(\frac z2)^{2n}
$$
are normalized Bessel functions.
\par The Dunkl transform  is defined on $L^1(\mathbb{R}^d\!,w_k(x)dx)$ by
$$
\mathcal{F}_kf(\xi)={\textstyle c_k}
\int_{\mathbb{R}^d}\!f(x)\,E_k(x,-i\,\xi)\,w_k(x)\,dx\,,
$$
where
$$
c_k\,=\int_{\mathbb{R}^d}\!e^{-\frac{|x|^2}2}\,w(x)\,dx\,.
$$
We list some known properties of this transform\,:
\begin{itemize}
\item[(i)]
The Dunkl transform is a topological automorphism
of the Schwartz space $\mathcal{S}(\mathbb{R}^d)$.
\item[(ii)]
(\textit{Plancherel Theorem\/})
The Dunkl transform extends to
an isometric automorphism of $L^2(\mathbb{R}^d\!,w_k(x)dx)$.
\item[(iii)]
(\textit{Inversion formula\/})
For every $f\!\in\!\mathcal{S}(\mathbb{R}^d)$,
and more generally for every $f\!\in\!L^1(\mathbb{R}^d\!,w_k(x)dx)$
such that $\mathcal{F}_kf\!\in\!L^1(\mathbb{R}^d\!,w_k(\xi)d\xi)$,
we have
$$
f(x)=\mathcal{F}_k^2\!f(-x)\qquad\forall\;x\!\in\!\mathbb{R}^d.
$$
\item[(iv)] if $f$ is a radial function in $L^1(\mathbb{R}^d\!,w_k(\xi)d\xi)$ such that  $f(x)=\widetilde{f}(|x|)$, then
$\mathcal{F}_k(f)$ is also radial and one has
\begin{equation}\label{rad}
    \mathcal{F}_k(f)(x)=b_k\int_0^\infty\widetilde{f}(s) \mathcal{J}_{\gamma_k+d/2-1}(s|x|)s^{2\gamma_k+d}ds.
\end{equation}
where $b_k= 2^{-(\gamma_k+d/2-1)}/\Gamma(\gamma_k+d/2)$.
\end{itemize}
\par Let $x\in \mathbb{R}^d$, the
Dunkl translation operator $\tau_x$ is given for $f\in
L^2_k(\mathbb{R}^d,\mathbb{C})$ by
\begin{eqnarray*}\label{dutr}
\mathcal{F}_k(\tau_x(f))(y)= \mathcal{F}_kf(y)\,E_k(x,iy), \quad
y\in\mathbb{R}^d.
\end{eqnarray*}
Using the Dunkl's intertwinig operator $V_k$, the operator  $\tau_x $ is related
to the usual translation by
$$\tau_x(f)(y)= (V_k)_x(V_k)_y((V_k)^{-1}(f)(x + y)).$$
In the case when $f(x)=\widetilde{f}(|x|)$ is a radial function in  $  \mathcal{S}(\mathbb{R}^d)$,  the Dunkl translation is represented by the following integral
\begin{eqnarray}\label{trad}
\tau_x(f)(y)=
\int_{\mathbb{R}^{n}}\widetilde{f}( \sqrt{|y|^2+|x|^2+2<y,\eta>}\;)\;d\nu_x(\eta).
 \end{eqnarray}
This formula shows that the Dunkl translation operators can   be extended to all radial functions $f$ in $L^p (\mathbb{R}^d,w_k(x)dx)$, $1\leq p\leq \infty$  and the following holds
\begin{equation}\label{trp}
  ||\tau_x(f)||_{p,k}\leq ||f||_{p,k },
  \end{equation}
where $\|.\|_{p,k}$ is the usual norm of $L^p (\mathbb{R}^d,w_k(x)dx)$.
\par We define  the Dunkl convolution product   for suitable functions $f$ and $g$ by
$$f*_kg(x)=\int_{\mathbb{R}^d} \tau_x(f)(-y)g(y)d\mu_k(y),\quad x\in\mathbb{R}^d$$
We note that it is commutative and satisfies the following property:
\begin{eqnarray}\label{conv}
 \mathcal{F}_k(f*_kg)=\mathcal{F}_k(f)\mathcal{F}_k(g), \quad \quad f,\;g\in L^2(\mathbb{R}^d,w_k(x)dx).
\end{eqnarray}
Moreover, the operator $ f \rightarrow f*_kg $ is bounded on $L^p (\mathbb{R}^d,w_k(x)dx)$
provide $g$ is a bounded radial function in $L^1(\mathbb{R}^d,w_k(x)dx)$. In particular we have the   the following Young's inequality:
\begin{equation}\label{Y}
 \|f*_kg\|_{p,k}\leq \|g\|_{1,k}\|f\|_{p,k}\;.
\end{equation}
 \section{ The  Dunkl Laplacian  operator}
In this section and in what follows, we often use the language of spectral theory for unbounded operators. Our main reference is \cite{RS}.
\par Let $(e_1,e_2,...,e_d)$ be an orthonormal basis of  $\mathbb{R}^d$. The Dunkl Laplacian operator is
defined by
 $$\Delta_k=\sum_{j=1}^dT_{j}^2,$$
 where $T_j=T_{e_j}$.  We consider   $-\Delta_k$ as a densely defined   operator on the Hilbert space $L^2(\mathbb{R}^d,w_k(x)dx)$  with domain $D(-\Delta_k)=\mathcal{S}(\mathbb{R}^d)$,  the Schwartz space of rapidly decreasing functions.
By means of the Dunkl transform one can prove that, for $f,g\in \mathcal{S}(\mathbb{R}^d)$,
\begin{eqnarray*}
 -\Delta_k(f)(x)&=&\mathcal{F}_k^{-1}(|x|^2\mathcal{F}_k(f)),
 \\\langle -\Delta_k f, g \rangle&=& \langle  f,- \Delta_k g \rangle,
 \\\langle- \Delta_k f, f \rangle&=&\sum_{j=1}^d\|T_j(f)\|_{2,k}^2,
  \end{eqnarray*}
   which show that  $-\Delta_k$ is a densely defined,  symmetric  and positive operator on $L^2(\mathbb{R}^d,w_k(x)dx)$. Thus, the Friedrichs extension theorem tells us that there is a positive self-adjoint extension of $-\Delta_k$.  Define the linear operator $A_k$ as extension of $-\Delta_k$  by
  $$D(A_k)=H_k^2(\mathbb{R}^d)=\{f\in L^2(\mathbb{R}^d,w_k(x)dx);\; |x|^2\mathcal{F}_k(f)\in L^2(\mathbb{R}^d,w_k(x)dx)\}$$
  $$ A_k(f) =\mathcal{F}_k^{-1}(|x|^2\mathcal{F}_k(f)); \qquad f \in D(A_k).$$
Clearly $A_k$ is symmetric  and positive.
\begin{thm}\label{th1}
  The operator  $A_k$ is  self-adjoint and that is the unique positive self-adjoint  extension operator of $-\Delta_k$.
\end{thm}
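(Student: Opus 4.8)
The plan is to exploit the fact that the Dunkl transform diagonalizes $-\Delta_k$, thereby reducing both assertions to elementary properties of multiplication operators. First I would record that, by the Plancherel theorem (property (ii) of the Dunkl transform), $\mathcal{F}_k$ extends to an isometric automorphism $U:=\mathcal{F}_k$ of $L^2(\mathbb{R}^d,w_k(x)dx)$; since the inversion formula guarantees that $U$ is invertible, $U$ is unitary. Let $M$ denote multiplication by the function $x\mapsto|x|^2$, taken with its maximal domain $D(M)=\{g:|x|^2g\in L^2(\mathbb{R}^d,w_k(x)dx)\}$. By construction $A_k=U^{-1}MU$ and $D(A_k)=U^{-1}(D(M))$.

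Self-adjointness of $A_k$ is then immediate. Because $x\mapsto|x|^2$ is real-valued and finite almost everywhere, $M$ is self-adjoint on its maximal domain, and it is positive since $|x|^2\ge 0$; as both self-adjointness and positivity are preserved under conjugation by the unitary $U$, the operator $A_k=U^{-1}MU$ is self-adjoint and positive. The inclusion $-\Delta_k\subset A_k$ is equally clear: for $f\in\mathcal{S}(\mathbb{R}^d)$ one has $\mathcal{F}_kf\in\mathcal{S}(\mathbb{R}^d)$, hence $|x|^2\mathcal{F}_kf\in L^2(\mathbb{R}^d,w_k(x)dx)$ and $A_kf=\mathcal{F}_k^{-1}(|x|^2\mathcal{F}_kf)=-\Delta_kf$.

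For the uniqueness statement I would prove that $-\Delta_k$ is essentially self-adjoint on $\mathcal{S}(\mathbb{R}^d)$ with closure $A_k$; the uniqueness of the positive self-adjoint extension then follows at once, since an essentially self-adjoint operator admits exactly one self-adjoint extension, namely its closure, and a fortiori a unique positive one. Because $U$ maps $\mathcal{S}(\mathbb{R}^d)$ onto itself (property (i)), and since $-\Delta_kf=U^{-1}MUf$ on $\mathcal{S}(\mathbb{R}^d)$, the operator $-\Delta_k|_{\mathcal{S}(\mathbb{R}^d)}$ is exactly the unitary conjugate $U^{-1}(M|_{\mathcal{S}(\mathbb{R}^d)})U$. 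As unitary conjugation preserves closures and essential self-adjointness, it suffices to show that $\mathcal{S}(\mathbb{R}^d)$ is a core for $M$. Equivalently, given $g\in D(M)$ I would produce functions $g_n\in\mathcal{S}(\mathbb{R}^d)$ with $g_n\to g$ and $|x|^2g_n\to|x|^2g$ in $L^2(\mathbb{R}^d,w_k(x)dx)$, obtained by first truncating $g$ with smooth radial cut-offs to large balls (so that dominated convergence handles both limits) and then mollifying each compactly supported truncation.

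The main obstacle is precisely this core/approximation step for $M$ in the weighted space $L^2(\mathbb{R}^d,w_k(x)dx)$: one must verify that the truncation-and-mollification procedure converges in the graph norm of $M$, which requires controlling the weight $w_k$ on the support of the cut-offs and checking that mollification stays within $\mathcal{S}(\mathbb{R}^d)$ while perturbing $|x|^2g$ only negligibly. Since $w_k$ is a locally integrable homogeneous weight and $|x|^2$ is smooth, these estimates are routine but form the only genuinely analytic part of the argument. Once the core property is in hand, $\overline{-\Delta_k|_{\mathcal{S}(\mathbb{R}^d)}}=U^{-1}\,\overline{M|_{\mathcal{S}(\mathbb{R}^d)}}\,U=U^{-1}MU=A_k$, which yields essential self-adjointness and, in particular, identifies $A_k$ with the Friedrichs extension as the unique positive self-adjoint extension of $-\Delta_k$.
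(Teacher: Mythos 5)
Your first half is correct and in fact takes a genuinely different route from the paper: you obtain self-adjointness of $A_k$ abstractly, as the unitary conjugate $\mathcal{F}_k^{-1}M\mathcal{F}_k$ of the maximal multiplication operator by $|x|^2$, whereas the paper verifies $D(A_k^*)\subset D(A_k)$ by hand, testing $f\in D(A_k^*)$ against the truncations defined by $\mathcal{F}_k(g_r)=|x|^2\mathcal{F}_k(f)\chi_{\{|x|<r\}}$ and letting $r\to\infty$. For uniqueness the paper also avoids your core argument entirely: it proves essential self-adjointness of $-\Delta_k$ on $\mathcal{S}(\mathbb{R}^d)$ by the deficiency criterion, showing $\bigl((-\Delta_k\pm i)\mathcal{S}(\mathbb{R}^d)\bigr)^\perp=\{0\}$, which after applying $\mathcal{F}_k$ uses only that $\mathcal{F}_k(\mathcal{S})=\mathcal{S}$, that $|x|^2\pm i$ never vanishes, and that $\mathcal{S}$ is dense in $L^2(\mathbb{R}^d,w_k(x)dx)$ --- no graph-norm approximation is needed.

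The step you dismiss as routine is the one genuine gap: Euclidean mollification does not behave well in $L^2(\mathbb{R}^d,w_k(x)dx)$. Two concrete problems. First, a compactly supported $g\in L^2(w_k\,dx)$ need not be locally Lebesgue-integrable: for $d=1$, $w_k(x)=|x|^{3}$ (multiplicity $k=3/2$) and $g(x)=|x|^{-3/2}\chi_{[-1,1]}(x)$, one has $\|g\|_{2,k}<\infty$ but $g\notin L^1_{loc}(dx)$, so $g*\rho_\epsilon$ is not even defined on a set of positive measure. Second, even for well-defined convolutions, $g*\rho_\epsilon\to g$ in $L^2(w_k\,dx)$ is not automatic: translations fail to be bounded on this space (the ratio $w_k(y+h)/w_k(y)$ blows up near the reflecting hyperplanes), and the maximal-function route requires $w_k\in A_2$, which holds only when all $2k(\alpha)<1$; the paper itself only proves $w_k\in A_p$ for some large $p$. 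The repair is easy and your architecture survives it: your smooth radial truncation step is fine (dominated convergence, and $|x|^2$ is bounded on a fixed ball containing all supports), but then approximate the truncation in $L^2(w_k\,dx)$ by continuous compactly supported functions --- dense in $L^2$ of any Radon measure --- and mollify \emph{those}, using uniform convergence on a fixed compact set together with $w_k\in L^1_{loc}$; alternatively, replace Euclidean convolution by the Dunkl convolution $\rho_n*_k\varphi$ and Young's inequality (\ref{Y}), exactly as the paper does in the proof of Theorem \ref{Th2}. With that substitution your proof closes, and is arguably more transparent than the paper's.
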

\begin{proof}
 Recall first that the adjoint operator $A_k^*$ is  defined on the domain  $D(A_k^*)$
 consisting of the function $f\in  L^2(\mathbb{R}^d,w_k(x)dx)$ for which the functional
  $  g\mapsto \langle A_kg , f\rangle $ is   bounded   on  $D(A_k)$
and  by Riesz representation theorem there exists a unique $f^*\in L^2(\mathbb{R}^d,w_k(x)dx)$ such that $\langle A_k(g),f\rangle=\langle g,f^*  \rangle$, since $D(A_k)$ is dense in $L^2(\mathbb{R}^d,w_k(x)dx)$. We define $A_k^*$ on  $D(A_k^*)$ by
$A_k^*(f)=f^*$. Since $A_k$ is symmetric,  then $A_k$ is  self-adjoint   if and only if  $D(A_k^*)=D(A_k)$. Noting   that
$D(A_k)\subset D(A_k^*)$ is obvious. Let $f\in D(A_k^*)$, then   there exists a constant $C>0$ such that for all $g \in D(A_k)$ we have
\begin{equation}\label{b}
 \langle   A_kg,f \rangle\leq C\|g\|_{2,k}.
\end{equation}
For $r>0$ define the function  $g_r \in  L^2(\mathbb{R}^d,w_k(x)dx)$  by
$$\mathcal{F}_k(g_r)(x)=|x|^2\mathcal{F}_k(f)(x)\chi_{\{|x|<r\}} $$
where $\chi$ denotes the characteristic function. Clearly we have $g_r\in D(A_k)$  and in view of (\ref{b})
\begin{eqnarray*}
  |\langle f, A_kg_r\rangle|&=&|\langle\mathcal{F}_k(f),\mathcal{F}_k(A_kg_r)\rangle|=\int_{|x|<r}|x|^4|\mathcal{F}_k(f)(x)|^2w_k(x)dx
\\&\leq &C \left(\int_{|x|<r}|x|^4|\mathcal{F}_k(f)(x)|^2w_k(x)dx\right)^{1/2}.
\end{eqnarray*}
It yields that
$$\int_{|x|<r}|x|^4|\mathcal{F}_k(f)(x)|^2w_k(x)dx\leq C^2.$$
Therefore by letting $r\rightarrow \infty$   we deduce  that $f\in  D(A_k)$ and  conclude that $D(A_k^*) \subset D(A_k)$.
\par Let us now   prove that $-\Delta_k$    is essentially self-adjoint, this means  that $-\Delta_k$ admits an  unique self-adjoint extension and that is equal to $A_k$, since we have proved that $A_k$ is self-adjoint.
 From   the general theory of unbounded operators,  see for example the chapter VIII of  \cite{RS}, it suffices to prove
that    $(-\Delta_k \pm i)D(-\Delta_k)$  is  dense, which is equivalent to $$\Big((-\Delta_k \pm i)D(-\Delta_k)\Big)^\perp=\{0\}.$$
  In fact, let $g\in \Big((-\Delta_k \pm i)D(-\Delta_k)\Big)^\perp$. Then for any $f\in D(-\Delta_k)=\mathcal{S}(\mathbb{R}^d)$
  \begin{eqnarray*}
    0=\langle(-\Delta_k\pm i)f,g\rangle = \langle (|.|^2\pm i)\mathcal{F}_k(f),\mathcal{F}_k(g)\rangle=
   \langle (\mathcal{F}_k(f),(|.|^2\pm i)\mathcal{F}_k(g)\rangle
  \end{eqnarray*}
Since $\mathcal{F}_k(\mathcal{S}(\mathbb{R}^d))=\mathcal{S}(\mathbb{R}^d)$, this implies by density argument that  $\mathcal{F}_k(g)=0$
and so $g=0$, as desired.
\end{proof}
 \par
The operator $A_k$ is generator of strongly continuous one parameter semi group $(e^{-tA_k})_{t\geq 0}$
where the operator  $e^{-tA_k}$ is given by
$$e^{-tA_k}f=  \mathcal{F}_k^{-1} (e^{-t|.|^2} \mathcal{F}_k(f))$$
for all $t>0$ and $f\in L^2(\mathbb{R}^d,w_k(x)dx).$
  It follows that  $e^{-tA_k}$ is an integral operator given by
\begin{equation}\label{1}
   e^{-tA_k}f(x)==k_t*_kf =\int_{\mathbb{R}^d}K_t(x,y)f(y)w_k(y)dy
\end{equation}
where
\begin{equation}\label{kk}
 k_t(x)= \mathcal{F}_k^{-1}(e^{-t|.|^2})(x) = t^{-\gamma_k-d/2}e^{-|x|^2/t}
\end{equation}
and from  (\ref{EV}) and  (\ref{trad})
\begin{equation}\label{KK}
 K_t(x,y)=\tau_x(k_t)(-y)= t^{-\gamma_k-d/2}e^{- (|x|^2+|y|^2)/t}E(2x/t,y).
\end{equation}
Now   using H\"{o}lder's Inequality,   one can  state  the following
\begin{cor}\label{3.2}
  $e^{-tA_k}$ can be extended to a bounded operator from $L^p(\mathbb{R}^d,w_k(x)dx)$ to $L^\infty(\mathbb{R}^d,w_k(x)dx)$ , for $1\leq p\leq \infty$.
\end{cor}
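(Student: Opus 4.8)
The plan is to exploit the explicit integral representation (\ref{1}) of the heat semigroup together with the contraction property (\ref{trp}) of the Dunkl translation, exactly as the hint about H\"older's inequality suggests. Fix $t>0$ and let $p'$ denote the conjugate exponent of $p$. Starting from
$$e^{-tA_k}f(x)=\int_{\mathbb{R}^d}K_t(x,y)\,f(y)\,w_k(y)\,dy,$$
I would first apply H\"older's inequality in the variable $y$ to obtain, for each fixed $x$,
$$|e^{-tA_k}f(x)|\leq \left(\int_{\mathbb{R}^d}|K_t(x,y)|^{p'}w_k(y)\,dy\right)^{1/p'}\|f\|_{p,k}=\|K_t(x,\cdot)\|_{p',k}\,\|f\|_{p,k},$$
with the usual modification when $p=1$ (so that $p'=\infty$ and the first factor becomes $\sup_y|K_t(x,y)|$). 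Taking the supremum over $x$, it therefore suffices to establish that $\sup_{x\in\mathbb{R}^d}\|K_t(x,\cdot)\|_{p',k}<\infty$.

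The key step is to identify this quantity with a translate of the radial heat kernel. By (\ref{KK}) we have $K_t(x,y)=\tau_x(k_t)(-y)$, and since the weight $w_k$ is invariant under $y\mapsto -y$ (each factor $|\langle\alpha,y\rangle|$ being even), the $L^{p'}$-norm is unchanged by this reflection; hence
$$\|K_t(x,\cdot)\|_{p',k}=\|\tau_x(k_t)\|_{p',k}.$$
Because $k_t(y)=t^{-\gamma_k-d/2}e^{-|y|^2/t}$ is a radial function lying in every $L^{p'}(\mathbb{R}^d,w_k(x)dx)$, the contraction estimate (\ref{trp}) applies and yields, uniformly in $x$,
$$\|\tau_x(k_t)\|_{p',k}\leq \|k_t\|_{p',k}.$$

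It then remains to verify that $\|k_t\|_{p',k}$ is finite. For $1<p\leq\infty$ (so $1\leq p'<\infty$) this is the Gaussian integral
$$\|k_t\|_{p',k}^{p'}=t^{-p'(\gamma_k+d/2)}\int_{\mathbb{R}^d}e^{-p'|y|^2/t}\,w_k(y)\,dy,$$
which converges by the homogeneity of $w_k$ and the rapid decay of the exponential (after a dilation it reduces to the constant $c_k$ up to an explicit factor); for the endpoint $p=1$ one simply has $\|k_t\|_{\infty,k}=t^{-\gamma_k-d/2}$. Combining these bounds gives $\|e^{-tA_k}f\|_{\infty,k}\leq \|k_t\|_{p',k}\,\|f\|_{p,k}$, which both proves the asserted boundedness and provides an explicit operator norm. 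I do not expect a genuine obstacle here: once the translation-contraction property is invoked the argument is routine, and the only points requiring care are the justification that (\ref{trp}) may legitimately be applied to the radial kernel $k_t$ and that the resulting bound is genuinely uniform in $x$.
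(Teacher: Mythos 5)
Your proposal is correct and takes essentially the same route as the paper: the paper's one-line justification is precisely H\"older's inequality applied to the integral representation (\ref{1}), with uniformity in $x$ coming from the translation contraction (\ref{trp}) for the radial Gaussian $k_t$ (as the authors make explicit later when deriving (\ref{infty}) via Cauchy--Schwarz). Your write-up simply fills in the details --- the even-weight reflection step identifying $\|K_t(x,\cdot)\|_{p',k}$ with $\|\tau_x(k_t)\|_{p',k}$ and the finiteness of $\|k_t\|_{p',k}$ --- all of which are sound.
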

\section{Dunkl Schr\"{o}dinger operator}
In this section, we use arguments analogous to those used in \cite{SS}.
\par Let $V$ be a nonnegative measurable function on $\mathbb{R}^d$ that is finite almost everywhere.
 In   the Hilbert space $L^2(\mathbb{R}^d,w_k(x)dx)$ we consider the  operator
 $$\mathcal{L}_k=A_k+V$$
  with domain   $D(\mathcal{L}_k)=D(A_k)\cap D(V)$ where
 $$D(A_k)=H_k^2(\mathbb{R}^d)=\{f\in L^2(\mathbb{R}^d,w_k(x)dx);\; |x|^2\mathcal{F}_k(f)\in L^2(\mathbb{R}^d,w_k(x)dx)\}$$
 and
 $$D(V)=\{ f\in L^2(\mathbb{R}^d,w_k(x)dx);\; Vf\in L^2(\mathbb{R}^d,w_k(x)dx)\}.$$
 We call this operator the Dunkl Schr\"{o}dinger operator. We have already proven that $A_k$ is positive self-adjoint operator, we should add here that
 the multiplication operator $f\rightarrow Vf$ is a positive self-adjoint, see \cite{RS}, VIII.3, Proposition 1.
 The important fact that we shall use comes from the    theory of  the  quadratic form. Define the form $q_k$ by
 \begin{eqnarray*}
 D(q_k)&=&\{f\in L^2(\mathbb{R}^d,w_k(x)dx);\;\left(\sum_{j=1}^n|T_jf|^2\right)^{1/2},   V^{1/2}f \in  L^2(\mathbb{R}^d,w_k(x)dx) \}\\
 q_k(f)&=&\sum_{j=1}^n\|T_jf\|_{2,k}^2+\|V^{1/2}f\|^2_{2,k}
\end{eqnarray*}
Clearly $C_0^\infty\subset D(q_k)$, so $q_k$ is densely defined.
\begin{lem}
  $q_k$   is closed, that is if $(\varphi_n)_n\in D(q_k)$, $\|\varphi_n-\varphi\|_{2,k}\rightarrow0$  and $q_k(\varphi_n-\varphi_m)\rightarrow0$ then  $\varphi \in D(q_k)$ and $q_k(\varphi_n-\varphi)\rightarrow 0$.
\end{lem}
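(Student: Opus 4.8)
The plan is to verify the defining property of a closed form directly, by reducing the joint form convergence to convergence of its two constituent pieces and then identifying their limits. Recall that $q_k$ splits as $q_k(f)=\sum_j\|T_jf\|_{2,k}^2+\|V^{1/2}f\|_{2,k}^2$, so the hypothesis $q_k(\varphi_n-\varphi_m)\to 0$ forces, for every $j$, the sequence $(T_j\varphi_n)_n$ and also the sequence $(V^{1/2}\varphi_n)_n$ to be Cauchy in $L^2(\mathbb{R}^d,w_k(x)dx)$. Since this space is complete, there exist $g_j\in L^2$ and $h\in L^2$ with $T_j\varphi_n\to g_j$ and $V^{1/2}\varphi_n\to h$ in the $L^2$-norm. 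The whole problem is then to show that these limits are the \emph{correct} ones, namely $g_j=T_j\varphi$ and $h=V^{1/2}\varphi$; once this is done, both $\varphi\in D(q_k)$ and the convergence $q_k(\varphi_n-\varphi)=\sum_j\|T_j\varphi_n-g_j\|_{2,k}^2+\|V^{1/2}\varphi_n-h\|_{2,k}^2\to 0$ are immediate.

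To identify $g_j$ I would pass through the Dunkl transform, using that $\mathcal{F}_k$ is unitary on $L^2(\mathbb{R}^d,w_k(x)dx)$ (Plancherel) together with the intertwining relation $\mathcal{F}_k(T_jf)(\xi)=i\,\xi_j\,\mathcal{F}_k(f)(\xi)$. From $\|\varphi_n-\varphi\|_{2,k}\to 0$ and Plancherel we get $\mathcal{F}_k\varphi_n\to\mathcal{F}_k\varphi$ in $L^2$, while $T_j\varphi_n\to g_j$ gives $i\,\xi_j\,\mathcal{F}_k\varphi_n=\mathcal{F}_k(T_j\varphi_n)\to\mathcal{F}_k g_j$ in $L^2$. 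Extracting a subsequence along which both $\mathcal{F}_k\varphi_n\to\mathcal{F}_k\varphi$ and $\mathcal{F}_k(T_j\varphi_n)\to\mathcal{F}_k g_j$ converge almost everywhere, and comparing the two pointwise limits, yields $\mathcal{F}_k g_j(\xi)=i\,\xi_j\,\mathcal{F}_k\varphi(\xi)$ a.e. In particular $\xi_j\mathcal{F}_k\varphi\in L^2$ for every $j$, hence $|\xi|\,\mathcal{F}_k\varphi\in L^2$, and by the definition of the Dunkl operators on $L^2$ this says exactly $g_j=\mathcal{F}_k^{-1}(i\,\xi_j\,\mathcal{F}_k\varphi)=T_j\varphi$. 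The identification $h=V^{1/2}\varphi$ is the analogous but simpler fact that multiplication by $V^{1/2}$ is a closed operator: passing to a subsequence with $\varphi_n\to\varphi$ a.e. gives $V^{1/2}\varphi_n\to V^{1/2}\varphi$ a.e., and comparison with the $L^2$-limit $h$ forces $h=V^{1/2}\varphi$.

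Combining the two identifications shows $\bigl(\sum_j|T_j\varphi|^2\bigr)^{1/2}\in L^2$ and $V^{1/2}\varphi\in L^2$, i.e. $\varphi\in D(q_k)$, after which $q_k(\varphi_n-\varphi)\to0$ follows from $T_j\varphi_n\to T_j\varphi$ and $V^{1/2}\varphi_n\to V^{1/2}\varphi$ in $L^2$ by linearity. The only genuinely non-routine step is the identification $g_j=T_j\varphi$, that is, the $L^2$-closedness of the Dunkl operators; this is precisely where the unitarity of $\mathcal{F}_k$ and the relation $\mathcal{F}_k T_j=i\,\xi_j\,\mathcal{F}_k$ are indispensable, since the difference term in $T_j$ is not well suited to a direct pointwise argument. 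A more structural alternative, which I would note as a check, is that $f\mapsto\sum_j\|T_jf\|_{2,k}^2$ is exactly the quadratic form of the self-adjoint operator $A_k$ of Theorem \ref{th1}, while $f\mapsto\|V^{1/2}f\|_{2,k}^2$ is the form of the self-adjoint multiplication operator $V$; both are therefore closed, and the sum of two non-negative closed forms on the intersection of their domains is again closed.
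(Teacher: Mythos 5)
Your proof is correct, and while it shares the paper's skeleton --- from $q_k(\varphi_n-\varphi_m)\to 0$ deduce that $(T_j\varphi_n)_n$ and $(V^{1/2}\varphi_n)_n$ are Cauchy in $L^2$, hence convergent to some $g_j$ and $h$, then identify these limits --- the identification step is done by a genuinely different route. The paper identifies both limits by duality against test functions: for $\psi\in C_0^\infty$ it writes $\langle g_j,\psi\rangle=\lim\langle T_j\varphi_n,\psi\rangle=-\lim\langle \varphi_n,T_j\psi\rangle=-\langle\varphi,T_j\psi\rangle$, so that $g_j=T_j\varphi$ in the distributional sense, and analogously $\langle h,\psi\rangle=\lim\langle\varphi_n,V^{1/2}\psi\rangle=\langle V^{1/2}\varphi,\psi\rangle$. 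You instead pass to the transform side via Plancherel and $\mathcal{F}_k(T_jf)=i\xi_j\mathcal{F}_kf$, comparing almost-everywhere convergent subsequences, and you handle the potential term by pure a.e.\ convergence of a subsequence. What each buys: for the potential term your route is actually the more robust one, since the paper's pairing tacitly needs $V^{1/2}\psi\in L^2$, i.e.\ $V\in L^1_{loc}$, which is not among the hypotheses of this lemma ($V$ is only assumed measurable, nonnegative and finite a.e.; local integrability enters only in Theorem \ref{Th2}), whereas your argument uses only finiteness a.e. For the Dunkl term, by contrast, the two identifications are at bottom equivalent: applying $\mathcal{F}_k(T_j\varphi_n)=i\xi_j\mathcal{F}_k\varphi_n$ to $\varphi_n\in D(q_k)$, where $T_j\varphi_n$ is a priori only a distributional object, itself requires the one-line justification $\langle T_j\varphi_n,\psi\rangle=-\langle\varphi_n,T_j\psi\rangle$ for Schwartz $\psi$ --- exactly the paper's computation --- so you should state that extension explicitly rather than cite the intertwining relation as if it were given on all of $D(q_k)$; this is a routine but not free step, not a gap. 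Your closing structural remark, that $q_k$ is the sum of the closed forms of the self-adjoint operators $A_k$ and of multiplication by $V$, and that a sum of nonnegative closed forms is closed, is a clean alternative proof consistent with how the paper subsequently invokes the form representation theorem to construct $L_k$.
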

\begin{proof}
Since  $q_k(\varphi_n-\varphi_m)\rightarrow0$ then $(V^{1/2}\varphi_n)_n$ and $(T_j\varphi_n)_n$ are Cauchy sequences in $ L^2(\mathbb{R}^d,w(x)dx)$
and so are convergent. Let $g_j=\lim T_j\varphi_n$ and $h=\lim V^{1/2}\varphi_n$. For any function $\psi\in C_0^\infty$,
\begin{eqnarray*}
 \langle g_j,\psi\rangle=\lim\langle T_j \varphi_n, \psi\rangle=-\lim\langle  \varphi_n,  T_j\psi\rangle=-\langle \varphi,T_j \psi\rangle.
\end{eqnarray*}
This yields that $T_j\varphi=g_j \in L^2(\mathbb{R}^d,w(x)dx)$. Similarly,
\begin{eqnarray*}
 \langle h,\psi\rangle=\lim\langle V^{1/2}\varphi_n, \psi\rangle=\lim\langle  \varphi_n,  V^{1/2}\psi\rangle=\langle \varphi,V^{1/2}\psi\rangle
=\langle V^{1/2}\varphi,\psi\rangle
\end{eqnarray*}
and so $ V^{1/2}\varphi=h\in L^2(\mathbb{R}^d,w(x)dx)$. Therefore $\varphi\in D(q_k)$ and $q_k(\varphi_n-\varphi)\rightarrow 0$
\end{proof}
Let $B_{q_k}$ be the associated sesquilinear form on $D(q_k)$. It follows that $D(q_k)$  is a Hilbert space with the inner product:
$$ \langle\varphi,\psi\rangle_{q_k}=\langle \varphi,\psi\rangle + B_{q_k}(\varphi,\psi),\quad \varphi,\;\psi\;\in D(q_k).$$
 The corresponding norm is given by
   $$\|\varphi\|_{q_k}= \sqrt{\|\varphi\|_{2,\;k}^2+ q_k(\varphi) }. $$
 The important consequence  of the above lemma is that  there exist  a unique positive self adjoint operator $L_k$ such that.
   $$q_k(\varphi)=\langle L_k(\varphi),\varphi \rangle,\qquad \varphi \in D(q_k),$$
and  is defined as  follows
   \begin{eqnarray}\label{def}
   D(L_k)  &=& \{\varphi\in D(q_k)/ \exists\; \widetilde{\varphi}\in L^2(\mathbb{R}^d,w(x)dx),\; B_{q_k}( \varphi,\psi)= \langle\tilde{\varphi}, \psi\rangle \;\forall\;\psi\in D(q_k)\}\nonumber
  \\    L_k(\varphi)& =& \widetilde{\varphi},\qquad \forall\;\varphi\in D(L_k).
\end{eqnarray}
 Moreover,
$$D(q_k)=D(L_k^{1/2})\quad\text{and}\quad q_k(\varphi)=\|L_k^{1/2}(\varphi)\|_{2,k}.$$
  \begin{thm}\label{Th2}
Assume that $V\in L^1_{loc}(\mathbb{R}^d,w_k(x)dx) $ and $V\geq0$, then  $C_0^\infty(\mathbb{R}^d)$ is dense in $D(q_k)$ in the norm $\|\varphi\|_{q_k}= \sqrt{\|\varphi\|_{2,\;k}^2+ q_k(\varphi) }.$
\end{thm}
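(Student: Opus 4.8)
The plan is to follow the strategy of Simon \cite{SS}: starting from an arbitrary $\varphi\in D(q_k)$, I would produce $C_0^\infty$ approximants in the norm $\|\cdot\|_{q_k}$ in two moves, a spatial cut-off reducing to compactly supported functions, followed by a regularization through Dunkl convolution with an approximate identity. Throughout, write $L^2_k:=L^2(\mathbb R^d,w_k\,dx)$. The only truly nonlocal obstructions come from the difference parts of the operators $T_j$ and from the singular multiplier $V$, and these are what dictate where care is needed.

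\emph{Cut-off.} Fix $\chi\in C_0^\infty$ with $\chi\equiv 1$ near the origin and set $\chi_n(x)=\chi(x/n)$. A direct computation using the definition of $T_j$ gives the Leibniz-type identity
\begin{equation*}
T_j(\chi_n\varphi)=\chi_n\,T_j\varphi+(\partial_j\chi_n)\varphi+\sum_{\alpha\in R^+}k(\alpha)\langle\alpha,e_j\rangle\,\varphi(\sigma_\alpha\cdot)\,\frac{\chi_n-\chi_n\circ\sigma_\alpha}{\langle\alpha,\cdot\rangle}.
\end{equation*}
I would then bound the two commutator terms: $\partial_j\chi_n=O(1/n)$ uniformly, while from $x-\sigma_\alpha x=\tfrac{2\langle\alpha,x\rangle}{|\alpha|^2}\alpha$ and the mean value theorem one gets $\bigl|\tfrac{\chi_n(x)-\chi_n(\sigma_\alpha x)}{\langle\alpha,x\rangle}\bigr|\le\tfrac{2\|\nabla\chi\|_\infty}{n|\alpha|}$, again $O(1/n)$. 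Since $w_k$ is $G$-invariant, $\|\varphi\circ\sigma_\alpha\|_{2,k}=\|\varphi\|_{2,k}$, so both commutator terms tend to $0$ in $L^2_k$; meanwhile $\chi_n T_j\varphi\to T_j\varphi$ and, because $0\le V\in L^1_{loc}$, also $V^{1/2}\chi_n\varphi\to V^{1/2}\varphi$ by dominated convergence. Hence $\|\chi_n\varphi-\varphi\|_{q_k}\to0$, reducing matters to compactly supported $\varphi\in D(q_k)$.

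\emph{Truncation and regularization.} For compactly supported $\varphi$ I would first reduce to a bounded function by the magnitude truncation $\varphi^{(M)}=F_M\circ\varphi$ with $F_M(t)=\max(-M,\min(M,t))$ (radial truncation in the complex case). For the potential part, $V|\varphi^{(M)}|^2\le V|\varphi|^2\in L^1_k$ gives convergence in the potential norm by dominated convergence. For the energy part $\mathcal E(f)=\sum_j\|T_jf\|_{2,k}^2$, the point is that normal contractions do not increase $\mathcal E$: this follows because the heat kernel $K_t$ of (\ref{KK}) is nonnegative (the Dunkl kernel is positive on real arguments by (\ref{EV})) and sub-Markovian ($\int K_t(x,y)\,w_k(y)\,dy\le1$), so that $\mathcal E$ is a Dirichlet form and $\mathcal E(\varphi^{(M)})\le\mathcal E(\varphi)$; combined with $L^2_k$-convergence and lower semicontinuity this yields $\|\varphi^{(M)}-\varphi\|_{q_k}\to0$. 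Now with $\varphi$ bounded and compactly supported, set $\varphi_\eps=\varphi*_k\phi_\eps$ for a nonnegative smooth radial approximate identity $\phi_\eps$. Then $\varphi_\eps\in C_0^\infty$ with $\|\varphi_\eps\|_{\infty}\le\|\varphi\|_{\infty}$ by (\ref{Y}); $\varphi_\eps\to\varphi$ in $L^2_k$ and (along a subsequence) a.e.; since $T_j$ acts as a Fourier multiplier and hence commutes with convolution by a radial function, $T_j\varphi_\eps=(T_j\varphi)*_k\phi_\eps\to T_j\varphi$ in $L^2_k$ by Young's inequality (\ref{Y}); and, all supports lying in a fixed compact $K'$, the bound $V|\varphi_\eps-\varphi|^2\le4\|\varphi\|_\infty^2\,V\mathbf 1_{K'}\in L^1_k$ gives $V^{1/2}\varphi_\eps\to V^{1/2}\varphi$ in $L^2_k$ by dominated convergence. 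Thus $\|\varphi_\eps-\varphi\|_{q_k}\to0$, completing the approximation.

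The main obstacle is the potential term, and it is precisely to tame it that the magnitude truncation is inserted before regularizing: without boundedness of $\varphi$ there is no dominating function for $V|\varphi_\eps|^2$, and convergence of the potential energy under convolution is not otherwise clear. The delicate structural input enabling the truncation step is that the nonlocal difference contributions to the $T_j$ do not destroy the contraction property of $\mathcal E$; I expect this, secured by the positivity and conservativeness of the Dunkl heat kernel, to be the real heart of the proof, the remaining estimates being routine once it is in place.
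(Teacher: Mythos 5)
Your argument is correct, but it takes a genuinely different route from the paper's at the one crucial step, the reduction to \emph{bounded} functions. The paper gets boundedness from the semigroup: via the Kato--Trotter product formula it shows $|e^{-tL_k}f|\le e^{-tA_k}(|f|)$, deduces $\operatorname{Ran}(e^{-L_k})\subset L^\infty\cap D(q_k)$, and proves this range dense in the form norm by a duality argument (its Lemmas 4.3--4.4); only afterwards does it cut off and mollify. You instead cut off first --- your commutator estimate for a general cutoff is fine, though the paper's choice of a \emph{radial} cutoff makes those extra terms vanish identically, since $\psi\circ\sigma_\alpha=\psi$ and $T_j\psi=\partial_j\psi$ give an exact Leibniz rule --- and then obtain boundedness by the magnitude truncation $F_M\circ\varphi$, justified by the Markovian (Dirichlet form) property of the energy form $\mathcal{E}(f)=\sum_j\|T_jf\|_{2,k}^2$. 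That property does hold ($K_t\ge 0$ and $\int K_t(x,y)w_k(y)\,dy=1$, so $e^{-tA_k}$ is sub-Markovian, and the Beurling--Deny criterion then lets normal contractions operate on the closed form), but it is the one place where you import machinery the paper never invokes, so it deserves a citation and some expansion: your compressed ``lower semicontinuity'' step should record that the uniform bound $\mathcal{E}(\varphi^{(M)})\le\mathcal{E}(\varphi)$ plus $L^2_k$-convergence gives weak convergence in the form Hilbert space with limit $\varphi$, and that $\limsup_M\bigl(\mathcal{E}(\varphi^{(M)})+\|\varphi^{(M)}\|_{2,k}^2\bigr)\le\mathcal{E}(\varphi)+\|\varphi\|_{2,k}^2$ upgrades weak to strong; for complex-valued $\varphi$ the radial truncation also needs the complex form of the contraction property. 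What each approach buys: the paper's semigroup route uses only the heat-kernel positivity it has already established, and its Lemma on $\operatorname{Ran}(e^{-tL_k})\subset D(L_k)\cap L^\infty$ is reused verbatim in the proof of Theorem 4.6, so it earns its keep twice; your route avoids the Trotter formula and the auxiliary operator $L_k$ entirely at this stage, at the price of appealing to standard Dirichlet-form theory and to conservativeness of the Dunkl heat semigroup. Your final mollification step coincides with the paper's, and your Dunkl-transform shortcut $T_j(\varphi*_k\phi_\eps)=(T_j\varphi)*_k\phi_\eps$ (both sides having Dunkl transform $i\xi_j\,\mathcal{F}_k\varphi\,\mathcal{F}_k\phi_\eps$) is a clean replacement for the paper's lengthy integration-by-parts verification of the same identity.
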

For the proof we will need   the following lemmas.
\begin{lem}\label{L1}
The range of $e^{-L_k}$ is dense in the Hilbert space $( D(q_k), \|.\|_{q_k})$.
\end{lem}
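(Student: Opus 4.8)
The plan is to argue by the spectral theorem for the positive self-adjoint operator $L_k$. Write $L_k=\int_0^\infty \lambda\,dE(\lambda)$ for its spectral resolution, so that for every bounded Borel function $\phi$ one has $\phi(L_k)=\int_0^\infty \phi(\lambda)\,dE(\lambda)$; in particular $e^{-L_k}=\int_0^\infty e^{-\lambda}\,dE(\lambda)$. Throughout I use the identification recorded just above the statement, namely $D(q_k)=D(L_k^{1/2})$ together with
$$\|\varphi\|_{q_k}^2=\|\varphi\|_{2,k}^2+\|L_k^{1/2}\varphi\|_{2,k}^2,\qquad \varphi\in D(q_k).$$
The goal is to produce, for a given $\varphi\in D(q_k)$, an explicit family in the range of $e^{-L_k}$ converging to $\varphi$ in $\|\cdot\|_{q_k}$.

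For $R>0$ set $P_R=E([0,R])$ and $\varphi_R=P_R\varphi$. The first step is to check that each $\varphi_R$ lies in the range of $e^{-L_k}$. Since the function $\lambda\mapsto e^{\lambda}\mathbf{1}_{[0,R]}(\lambda)$ is bounded (by $e^{R}$), the vector $\psi_R:=\bigl(\int_{[0,R]}e^{\lambda}\,dE(\lambda)\bigr)\varphi$ belongs to $L^2(\mathbb{R}^d,w_k(x)dx)$, and the functional-calculus identity $e^{-\lambda}e^{\lambda}\mathbf{1}_{[0,R]}(\lambda)=\mathbf{1}_{[0,R]}(\lambda)$ yields $e^{-L_k}\psi_R=P_R\varphi=\varphi_R$. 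Hence $\varphi_R\in\operatorname{Ran}(e^{-L_k})\subset D(q_k)$, the last inclusion because $\lambda^{n}e^{-\lambda}$ is bounded, so $e^{-L_k}$ maps into $\bigcap_n D(L_k^{n})$.

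The second step is to show $\varphi_R\to\varphi$ in the form norm as $R\to\infty$. As $\varphi\in D(L_k^{1/2})$, the projection $P_R$ commutes with $L_k^{1/2}$, so $L_k^{1/2}\varphi_R=P_R\,L_k^{1/2}\varphi$ and therefore
$$\|\varphi-\varphi_R\|_{q_k}^2=\|(I-P_R)\varphi\|_{2,k}^2+\|(I-P_R)L_k^{1/2}\varphi\|_{2,k}^2.$$
Both terms tend to $0$ as $R\to\infty$ by the spectral theorem (dominated convergence against the finite measures $\langle E(\cdot)\varphi,\varphi\rangle$ and $\langle E(\cdot)L_k^{1/2}\varphi,L_k^{1/2}\varphi\rangle$), using that $\varphi$ and $L_k^{1/2}\varphi$ both lie in $L^2(\mathbb{R}^d,w_k(x)dx)$. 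This establishes the density.

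I expect the only delicate point to be the bookkeeping of domains in the functional calculus: one must invoke $D(q_k)=D(L_k^{1/2})$ to guarantee $L_k^{1/2}\varphi\in L^2(\mathbb{R}^d,w_k(x)dx)$ (so that the second term is finite and the cutoffs $P_R$ commute through $L_k^{1/2}$), and the boundedness of $e^{\lambda}\mathbf{1}_{[0,R]}$ to keep $\psi_R$ in the Hilbert space. An equivalent route is to show that the $\langle\cdot,\cdot\rangle_{q_k}$-orthogonal complement of the range is trivial: if $\langle e^{-L_k}\psi,g\rangle_{q_k}=0$ for all $\psi$, then, since $e^{-L_k}\psi\in D(L_k)$, moving one factor $L_k^{1/2}$ onto this smooth vector and using self-adjointness of $L_k^{1/2}$ gives $\langle\psi,(I+L_k)e^{-L_k}g\rangle=0$ for all $\psi$, whence $(I+L_k)e^{-L_k}g=0$; as the multiplier $(1+\lambda)e^{-\lambda}$ is strictly positive the operator is injective, forcing $g=0$.
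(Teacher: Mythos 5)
Your proposal is correct, and its main argument takes a genuinely different route from the paper's. The paper proves the lemma by duality in the Hilbert space $\bigl(D(q_k),\langle\cdot,\cdot\rangle_{q_k}\bigr)$: it takes $\varphi$ orthogonal in this inner product to $\operatorname{Ran}(e^{-L_k})$, uses the form identity $B_{q_k}(e^{-L_k}\psi,\varphi)=\langle L_k e^{-L_k}\psi,\varphi\rangle$ (valid because $e^{-L_k}\psi\in D(L_k)$) together with self-adjointness of the bounded operator $(L_k+1)e^{-L_k}$ to obtain $\langle (L_k+1)e^{-L_k}\varphi,\psi\rangle=0$ for all $\psi$, and concludes $\varphi=0$ from invertibility of $L_k+1$ and injectivity of $e^{-L_k}$ --- this is exactly the ``equivalent route'' you sketch in your closing paragraph, so in effect you supply both proofs. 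Your primary argument is instead constructive: you exhibit the spectral cutoffs $\varphi_R=E([0,R])\varphi$ explicitly as elements of the range, via $\psi_R=\bigl(\int_{[0,R]}e^{\lambda}\,dE(\lambda)\bigr)\varphi\in L^2$ and $e^{-L_k}\psi_R=\varphi_R$, and you get convergence in $\|\cdot\|_{q_k}$ from the commutation $L_k^{1/2}P_R\varphi=P_RL_k^{1/2}\varphi$ on $D(L_k^{1/2})$ plus dominated convergence; all the domain bookkeeping you flag is in order. Both arguments rest on the same two facts the paper records just before the lemma, namely $D(q_k)=D(L_k^{1/2})$ with $q_k(\varphi)=\|L_k^{1/2}\varphi\|_{2,k}^2$ (whence, by polarization, $B_{q_k}(u,v)=\langle L_k^{1/2}u,L_k^{1/2}v\rangle$) and $\operatorname{Ran}(e^{-L_k})\subset D(q_k)$. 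What your constructive version buys: explicit approximants with a quantitative handle ($\|\psi_R\|_{2,k}\leq e^R\|\varphi\|_{2,k}$), and no reliance on completeness of $(D(q_k),\|\cdot\|_{q_k})$ --- the orthocomplement argument needs that space to be a genuine Hilbert space (this is why the paper proves closedness of $q_k$ first), since in an incomplete inner-product space a proper subspace can have trivial orthogonal complement. What the paper's version buys: brevity (three lines) and no need for the full projection-valued resolution, only the functional calculus applied to $e^{-L_k}$ and $(L_k+1)e^{-L_k}$.
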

\begin{proof}
    Let $\varphi\in D(q_k)$ such that
$$\langle e^{-L_k}\psi, \varphi\rangle+B_{q_k}( e^{-L_k}\psi, \varphi)=\langle e^{-L_k}\psi, \varphi\rangle+ \langle L_k e^{-L_k}\psi, \varphi\rangle=0 ,\quad \forall \psi\in L^2(\mathbb{R}^d,w_k(x)dx)$$
This implies that
$$\langle(L_k+1) e^{-L_k}\varphi,\psi\rangle =0,\quad \forall \psi\in L^2(\mathbb{R}^d,w_k(x)dx)$$
Since $L_k+1$ is  invertible, then   $e^{-L_k}\varphi=0$ which implies that $\varphi=0$. The density of $Ran(e^{-L_k})$  follows.
\end{proof}
\begin{lem}\label{L2}
$L^\infty \cap D(q_k)$  is dense in the Hilbert space $( D(q_k), \|.\|_{q_k})$.
\end{lem}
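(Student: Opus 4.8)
The plan is to exploit Lemma \ref{L1} together with a smoothing property of the semigroup, rather than truncating a general element of $D(q_k)$. The idea is to show that the range of $e^{-L_k}$ already sits inside $L^\infty\cap D(q_k)$, so that the density claimed here is an immediate consequence of the density of $\mathrm{Ran}(e^{-L_k})$ established in Lemma \ref{L1}. Since $e^{-L_k}$ maps $L^2(\mathbb{R}^d,w_k(x)dx)$ into $D(L_k)\subset D(q_k)$, the only thing left to prove is that $e^{-L_k}\psi\in L^\infty$ for every $\psi\in L^2(\mathbb{R}^d,w_k(x)dx)$.

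First I would establish the pointwise domination $|e^{-tL_k}\psi|\leq e^{-tA_k}|\psi|$ for all $t>0$. Because $V\geq 0$, the multiplication semigroup $e^{-tV}$ takes values in $[0,1]$, while the free semigroup $e^{-tA_k}$ is the integral operator with the nonnegative kernel $K_t(x,y)$ of (\ref{KK}), the Dunkl kernel $E_k$ being positive on $\mathbb{R}^d\times\mathbb{R}^d$. Applying the Trotter product formula $e^{-tL_k}=\lim_{n}\big(e^{-\frac{t}{n}A_k}e^{-\frac{t}{n}V}\big)^n$ in the form sense and comparing kernels termwise then yields $0\leq e^{-tL_k}(x,y)\leq K_t(x,y)$, which is exactly the domination sought; equivalently one may invoke the standard form-domination criterion, using that $\sum_j\|T_jf\|_{2,k}^2\leq q_k(f)$ on $D(q_k)$ and that $e^{-tA_k}$ is positivity preserving.

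Granting this domination, Corollary \ref{3.2} with $p=2$ furnishes a constant $C>0$ such that $\|e^{-A_k}g\|_{\infty,k}\leq C\|g\|_{2,k}$, whence
$$\|e^{-L_k}\psi\|_{\infty,k}\leq \big\|e^{-A_k}|\psi|\big\|_{\infty,k}\leq C\,\||\psi|\|_{2,k}=C\,\|\psi\|_{2,k}<\infty,$$
so that $e^{-L_k}\psi\in L^\infty$. Consequently $\mathrm{Ran}(e^{-L_k})\subset L^\infty\cap D(q_k)$. Since Lemma \ref{L1} asserts that $\mathrm{Ran}(e^{-L_k})$ is dense in $\big(D(q_k),\|\cdot\|_{q_k}\big)$, the larger set $L^\infty\cap D(q_k)$ is dense as well, which is the assertion.

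I expect the genuine obstacle to be the justification of the domination $|e^{-tL_k}\psi|\leq e^{-tA_k}|\psi|$: one must keep in mind that $L_k$ is defined through the closed form $q_k$ via (\ref{def}) rather than as an operator sum, so the Trotter formula has to be read in the form sense, and the pointwise positivity of the Dunkl heat kernel in (\ref{KK}) must be used to pass from the approximants to the kernel comparison. Once this comparison is secured, every remaining step reduces to a direct application of Corollary \ref{3.2} and Lemma \ref{L1}.
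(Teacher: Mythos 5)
Your proof is correct and follows essentially the same route as the paper: the Trotter product formula combined with the positivity of the Dunkl heat kernel gives the domination $|e^{-tL_k}\psi|\leq e^{-tA_k}|\psi|$, the $L^2\to L^\infty$ bound of Corollary \ref{3.2} then places $\mathrm{Ran}(e^{-L_k})$ in $L^\infty$, spectral calculus places it in $D(q_k)$ (you use $\mathrm{Ran}(e^{-L_k})\subset D(L_k)\subset D(q_k)$, the paper the boundedness of $L_k^{1/2}e^{-L_k}$ — an immaterial difference), and Lemma \ref{L1} concludes. Your closing caveat about reading the Trotter formula in the form sense is exactly the point the paper handles by citing Kato's strong Trotter theorem (Theorem S.21 of \cite{RS}).
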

 \begin{proof}
 Using the Kato's stong Trotter product formula, see  Theorem S.21 of \cite{RS}, we have in the  strong convergence
\begin{equation}\label{trotter}
  s-\lim \left(e^{-tA_k/n} e^{-tV/n}\right)^n=e^{-tL_k}, \quad t\geq 0.
\end{equation}
By the fact that $|e^{-tV/n}f|\leq |f| $ and   $|e^{-tA_k}(f)|\leq e^{-tA_k}(|f|)$, which can be seen  from   (\ref{1}), it follows that
\begin{equation}\label{trott}
 |e^{-tL_k}(f)|\leq e^{-tA_k}(|f|).
\end{equation}
In particular we have
\begin{equation}\label{infty}
\|e^{-L_k}(f)\|_{\infty}\leq \| e^{-A_k}(|f|)\|_{\infty}\leq c \|f\|_{2,k}.,
\end{equation}
where the second inequality  follows from (\ref{1}), by using  Cauchy-Schwarz Inequality and (\ref{trp}).
From (\ref{infty}) we have  that
\begin{equation}\label{in1}
 Ran(e^{-L_k})\subset L^\infty.
\end{equation}
 Since the function $\lambda\rightarrow \lambda^{1/2}e^{-\lambda}$ is bounded on $(0,\infty)$ then by spectral theorem the operator  $L_k^{1/2}e^{-L_k}$ is bounded on $L^2(\mathbb{R}^d, w_k(x)dx)$.
We deduce that
\begin{equation}\label{dq}
 Ran(e^{-L_k})\subset D(L_k^{1/2}) =D(q_k).
\end{equation}
Similarly we have
$$Ran(e^{-L_k})\subset D(L_k)$$
 and   in view of   (\ref{in1}) and (\ref{dq}) we have that
$$Ran(e^{-L_k})\subset L^\infty \cap D(q_k).$$
We then conclude Lemma \ref{L2} from the result of Lemma \ref{L1}.
 \end{proof}
\begin{proof}[\textbf{Proof of Theorem \ref{Th2}}]
\par Let $$S=\{\varphi\in L^\infty \cap D(q_k);\;  supp(\varphi) \;is\; compact\}.$$
 We claim that $S$  is dense in  $( D(q_k), \|.\|_{q_k})$. Observe that for $\varphi\in L^\infty \cap D(q_k) $ and $\psi\in C_0^\infty(\mathbb{R}^d)$ be a radial function, we have that
$   V^{1/2}\varphi\psi \in  L^2(\mathbb{R}^d,w_k(x)dx)$ and
 in the distributional sense
  \begin{equation}\label{pp}
 T_j(\varphi\psi)   =T_j(\varphi) \psi +\varphi T_j(\psi) ,\quad j=1,2...d
  \end{equation}
   which gives that  $T_j(\varphi\psi)\in L^2(\mathbb{R}^d,w_k(x)dx) $. Hence  $\varphi\psi\in S$. From this fact if we choose a radial function  $\psi\in C_0^\infty(\mathbb{R}^d)$ with $\psi(x)=1$ near $0$ and we set $\varphi_n=\psi(./n)\varphi$ then  by Lebesgue dominated convergence theorem  and (\ref{pp}) the following hold
 \begin{itemize}
   \item $\|\varphi_n- \varphi\|_{2,k}\rightarrow 0$,
   \item $\|V^{1/2}\varphi_n- V^{1/2}\varphi\|_{2,k}\rightarrow 0$,
   \item    $\|T_j\varphi_n- T_j\varphi\|_{2,k} \rightarrow0$.
 \end{itemize}
This implies that $\varphi_n\rightarrow\varphi$ in norm $\|.\|_{q_k}$.
\par  We now  claim that $C_0^\infty(\mathbb{R}^d)$  is dense in $( D(q_k), \|.\|_{q_k})$.
Take  a radial function $\rho \in C_0^\infty(\mathbb{R}^d)$ with
$$\int_{\mathbb{R}^d}\rho(x)w_k(x)dx=1$$
For $\varphi\in S$ we  define $(\varphi_n)_n$ by $\varphi_n= \rho_n*_k\varphi$ where $\rho_n=n^{-2\gamma_k-d}\rho(x/n)$. Let us observe that $\varphi_n\in C_0^\infty(\mathbb{R}^d)$ and
 $$T_j\varphi_n=\rho_n*_kT_j\varphi,\quad j=1,2...d,$$
which can be  seen as  following: for $\psi\in  C_0^\infty(\mathbb{R}^d)$,
 \begin{eqnarray*}
 \int_{\mathbb{R}^d}\varphi_n T_j\psi &= & \int_{\mathbb{R}^d} \int_{\mathbb{R}^d}\varphi(y)\tau_x(\rho_n)(-y)T_j(\psi)(x)w_k(y)w_k(x)dydx
 \\&=& \int_{\mathbb{R}^d} \left(\int_{\mathbb{R}^d}\tau_{-y}(\rho_n)(x)T_j(\psi)(x)w_k(x)dx\right)\varphi(y)w_k(y)dy
 \\&=&- \int_{\mathbb{R}^d} \left(\int_{\mathbb{R}^d}\tau_{-y}(T_j(\rho_n))(x) \psi (x)w_k(x)dx\right)\varphi(y)w_k(y)dy
\\&=& -\int_{\mathbb{R}^d} \left(\int_{\mathbb{R}^d}\tau_{x}(T_j(\rho_n))(-y)\varphi(y)w_k(y)dy\right)\psi(x)w_k(x)dx
\\&=& -\int_{\mathbb{R}^d} \left(\int_{\mathbb{R}^d}T_j\tau_{x}(\rho_n)(-y)\varphi(y)w_k(y)dy\right)\psi(x)w_k(x)dx
\\&=&  \int_{\mathbb{R}^d} \left(\int_{\mathbb{R}^d}T_j\Big(\tau_{x}(\rho_n)(-\;.)\Big)(y)\varphi(y)w_k(y)dy\right)\psi(x)w_k(x)dx
\\&=& -\int_{\mathbb{R}^d} \left(\int_{\mathbb{R}^d}\tau_{x}(\rho_n)(-y)T_j(\varphi)(y)w_k(y)dy\right)\psi(x)w_k(x)dx
\\&=&- \int_{\mathbb{R}^d}\rho_n*_kT_j\varphi(x)\psi(x)w_k(x)dx
\end{eqnarray*}
 Therefore as convergent in $ L^2(\mathbb{R}^d,w_k(x)dx)$ we obtain
\begin{itemize}
   \item $\varphi_n\rightarrow \varphi$,
   \item $V^{1/2}\varphi_n\rightarrow V^{1/2}\varphi$,
   \item    $T_j\varphi_n\rightarrow T_j\varphi$
 \end{itemize}
 and  thus  $\varphi_n\rightarrow \varphi$ in the norm $\|.\|_{q_k}$. This conclude the proof of the density of $C_0^\infty(\mathbb{R}^d)$.
 \end{proof}
   Next we define   in the distributional way $ \mathcal{L}_{k,dist}= A_k+V$,  that is for
 $ \varphi \in  L^2(\mathbb{R}^d,w_k(x)dx)$,
 $$\int_{\mathbb{R}^d}  \mathcal{L}_{k,dist}\varphi(x)\psi(x)w_k(x)dx=\int_{\mathbb{R}^d} \varphi(x)(A_k +V)\psi(x)w_k(x)dx, \quad \forall \;\psi \in C_0^\infty(\mathbb{R}^d) .$$
 Clearly $ \mathcal{L}_{k,dist}= \mathcal{L}_{k}$ on $C_0^\infty(\mathbb{R}^d)$.
  \begin{cor}\label{cor}
 We have that  $$D(L_k)=\{ \varphi\in D(q_k);\;  \mathcal{L}_{k,dist}(\varphi)\in L^2(\mathbb{R}^d,w_k(x)dx)\}$$
  \end{cor}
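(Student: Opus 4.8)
The plan is to prove the two inclusions at once by reducing membership in each of the two sets to the existence of a single element $\tilde\varphi\in L^2(\mathbb{R}^d,w_k(x)dx)$ representing the \emph{same} functional; the only difference between the two conditions will be the class of test functions against which the defining identity is imposed, and that gap is then closed by density.

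First I would establish the bridging identity: for every $\varphi\in D(q_k)$ and every $\psi\in C_0^\infty(\mathbb{R}^d)$,
$$B_{q_k}(\varphi,\psi)=\langle\varphi,(A_k+V)\psi\rangle.$$
Indeed, $\varphi\in D(q_k)$ means $V^{1/2}\varphi\in L^2$ and $T_j\varphi\in L^2$, the latter understood distributionally through $\langle T_j\varphi,\chi\rangle=-\langle\varphi,T_j\chi\rangle$ for $\chi\in C_0^\infty(\mathbb{R}^d)$, by the antisymmetry of the Dunkl operators with respect to $w_k(x)dx$. Applying this with the test function $\chi=T_j\psi\in C_0^\infty(\mathbb{R}^d)$ gives $\langle T_j\varphi,T_j\psi\rangle=-\langle\varphi,T_j^2\psi\rangle$, and summing over $j$ yields $\sum_{j}\langle T_j\varphi,T_j\psi\rangle=-\langle\varphi,\Delta_k\psi\rangle=\langle\varphi,A_k\psi\rangle$, since $A_k=-\Delta_k$ on $C_0^\infty(\mathbb{R}^d)\subset\mathcal{S}(\mathbb{R}^d)$. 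For the potential term, since $V$ is real and both $V^{1/2}\psi$ and $V^{1/2}\varphi$ lie in $L^2$ one has $\langle V^{1/2}\varphi,V^{1/2}\psi\rangle=\langle\varphi,V\psi\rangle$. Adding the two contributions proves the identity. (Here the restriction to $\varphi\in D(q_k)$ is what makes every pairing legitimate, as $(V^{1/2}\varphi)(V^{1/2}\psi)\in L^1$.)

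With this identity in hand I would reformulate both sets. By the very definition of $\mathcal{L}_{k,dist}$, a function $\varphi\in D(q_k)$ satisfies $\mathcal{L}_{k,dist}\varphi\in L^2$ precisely when there exists $\tilde\varphi\in L^2$ with $\langle\varphi,(A_k+V)\psi\rangle=\langle\tilde\varphi,\psi\rangle$ for all $\psi\in C_0^\infty(\mathbb{R}^d)$; by the bridging identity this is exactly $B_{q_k}(\varphi,\psi)=\langle\tilde\varphi,\psi\rangle$ for all $\psi\in C_0^\infty(\mathbb{R}^d)$. On the other hand, by the definition (\ref{def}), $\varphi\in D(L_k)$ holds precisely when there exists $\tilde\varphi\in L^2$ with $B_{q_k}(\varphi,\psi)=\langle\tilde\varphi,\psi\rangle$ for all $\psi\in D(q_k)$. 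Thus the two conditions differ only in whether the test functions range over $C_0^\infty(\mathbb{R}^d)$ or over all of $D(q_k)$, and whenever either holds one gets $L_k\varphi=\tilde\varphi=\mathcal{L}_{k,dist}\varphi$.

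It remains to close this gap, which is the crux of the matter. The functional $\psi\mapsto B_{q_k}(\varphi,\psi)$ obeys $|B_{q_k}(\varphi,\psi)|\le q_k(\varphi)^{1/2}q_k(\psi)^{1/2}\le\|\varphi\|_{q_k}\|\psi\|_{q_k}$, while $\psi\mapsto\langle\tilde\varphi,\psi\rangle$ obeys $|\langle\tilde\varphi,\psi\rangle|\le\|\tilde\varphi\|_{2,k}\|\psi\|_{2,k}\le\|\tilde\varphi\|_{2,k}\|\psi\|_{q_k}$, so both are continuous on the Hilbert space $(D(q_k),\|.\|_{q_k})$. Theorem \ref{Th2} states precisely that $C_0^\infty(\mathbb{R}^d)$ is dense there, and hence an identity between two $\|.\|_{q_k}$-continuous functionals that holds on $C_0^\infty(\mathbb{R}^d)$ holds on all of $D(q_k)$. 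Consequently the two membership conditions coincide, which is the asserted equality of domains. I expect the density step to be the genuine obstacle: without Theorem \ref{Th2} the distributional condition tests only against $C_0^\infty(\mathbb{R}^d)$ and could a priori be strictly weaker than membership in $D(L_k)$; everything else is the routine integration-by-parts bookkeeping furnished by the antisymmetry of the operators $T_j$.
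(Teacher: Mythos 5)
Your proposal is correct and follows essentially the same route as the paper: the integration-by-parts identity $B_{q_k}(\varphi,\psi)=\langle\varphi,(A_k+V)\psi\rangle=\langle\mathcal{L}_{k,dist}\varphi,\psi\rangle$ for $\psi\in C_0^\infty(\mathbb{R}^d)$, combined with the definition (\ref{def}) of $D(L_k)$ and the density of $C_0^\infty(\mathbb{R}^d)$ in $(D(q_k),\|\cdot\|_{q_k})$ furnished by Theorem \ref{Th2}. The only difference is presentational: you make explicit the $\|\cdot\|_{q_k}$-continuity of the two functionals that justifies passing from test functions in $C_0^\infty(\mathbb{R}^d)$ to all of $D(q_k)$, a step the paper leaves implicit.
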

 \begin{proof}
   For $\varphi\in D(q_k)$ and $\psi\in C_0^\infty(\mathbb{R}^d) $ we have
   \begin{eqnarray*}
    B_{q_k}(\varphi,\psi)&=&\sum_{j=1}^d \langle T_j(\varphi),T_j(\psi)\rangle + \langle  V\varphi,    \psi\rangle
    \\&=& -\sum_{j=1}^d \langle \varphi,T_j^2(\psi)\rangle + \langle  \varphi,   V \psi\rangle
    \\&=&\langle   \varphi, (A_k +V)\psi\rangle
     \\&=&\langle\mathcal{L}_{k,dist} \varphi, \psi\rangle
\end{eqnarray*}
We conclude the corollary by the definition (\ref{def}) of the domain $D(L_k)$ and the density of $C_0^\infty(\mathbb{R}^d) $.
 We add here that when $ \mathcal{L}_{k,dist}(\varphi)\in L^2(\mathbb{R}^d,w_k(x)dx)$,
\begin{equation}\label{55}
  H(\varphi)=\mathcal{L}_{k,dist}(\varphi)
\end{equation}
This fact will be used in the proof of the next theorem.
   \end{proof}
  \begin{thm}\label{th2}
Assume that $V\in L^2_{loc}(\mathbb{R}^d,w_k(x)dx) $ and $V\geq0$, then   $\mathcal{L}_k$ is essentially self-adjoint on $C^\infty_0(\mathbb{R}^d)$
and its closure  is $L_k$
\end{thm}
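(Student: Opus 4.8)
The plan is to deduce essential self-adjointness from the self-adjointness of the form operator $L_k$, together with the form-core Theorem~\ref{Th2} and the domain description in Corollary~\ref{cor}. First I would check that $C_0^\infty(\mathbb{R}^d)\subset D(L_k)$ with $L_k=\mathcal{L}_k$ there: for $\varphi\in C_0^\infty$ one has $\varphi\in D(q_k)$, the term $\Delta_k\varphi$ is again smooth with compact support (the difference quotients have removable singularities on the hyperplanes $\langle\alpha,\cdot\rangle=0$), and since $V\in L^2_{loc}$ while $\varphi$ has compact support, $V\varphi\in L^2$. Hence $\mathcal{L}_{k,dist}\varphi=(A_k+V)\varphi\in L^2$, so by Corollary~\ref{cor}, $\varphi\in D(L_k)$ and $L_k\varphi=\mathcal{L}_{k,dist}\varphi=\mathcal{L}_k\varphi$. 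Because $L_k$ is closed, this gives $\overline{\mathcal{L}_k|_{C_0^\infty}}\subseteq L_k$; since a self-adjoint operator admits no proper symmetric extension, it then suffices to prove that $\mathcal{L}_k|_{C_0^\infty}$ is essentially self-adjoint, for then its closure is automatically $L_k$.

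Second, since $L_k\geq0$ the restriction $\mathcal{L}_k|_{C_0^\infty}$ is a positive symmetric operator, and a standard criterion (see \cite{RS}) reduces essential self-adjointness to the triviality of the deficiency space at $-1$, that is, to showing that the only $u\in L^2(\mathbb{R}^d,w_k(x)dx)$ with
$$\langle u,(\mathcal{L}_k+1)\varphi\rangle=0\qquad\forall\,\varphi\in C_0^\infty(\mathbb{R}^d)$$
is $u=0$. This condition says precisely that $(\mathcal{L}_{k,dist}+1)u=0$ in the distributional sense, i.e. $A_ku=-(V+1)u$, which is meaningful because $u,V\in L^2_{loc}$ force $(V+1)u\in L^1_{loc}$.

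Third, I would show that any such $u$ lies in $D(q_k)$. Granting this, the proof closes at once: by Corollary~\ref{cor}, $u\in D(L_k)$ with $L_ku=\mathcal{L}_{k,dist}u=-u$, and positivity of $L_k$ gives $0\le\langle L_ku,u\rangle=-\|u\|_{2,k}^2$, whence $u=0$. To establish $u\in D(q_k)$ I would regularize by Dunkl convolution, $u_\varepsilon=\rho_\varepsilon*_k u$ with a radial approximate identity $\rho_\varepsilon$; then $u_\varepsilon$ is smooth, $u_\varepsilon\to u$ in $L^2$, and since convolution with a radial function is a Dunkl-transform multiplier it commutes with $A_k$, so $A_ku_\varepsilon=-\rho_\varepsilon*_k\big((V+1)u\big)$. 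Testing this identity against $u_\varepsilon$ and using $\sum_j\|T_ju_\varepsilon\|_{2,k}^2=\langle A_ku_\varepsilon,u_\varepsilon\rangle$ together with $V\geq0$ should yield uniform bounds on $\sum_j\|T_ju_\varepsilon\|_{2,k}^2$ and $\|V^{1/2}u_\varepsilon\|_{2,k}^2$; passing to the limit and using that the form $q_k$ is closed then places $u$ in $D(q_k)$.

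The hard part is precisely this last regularity step, because the multiplication operator $V$ does not commute with the Dunkl mollification, and the nonlocal reflection terms in $T_j$ make the commutator between $V$ and $\rho_\varepsilon*_k(\cdot)$ delicate to control; it is here that the stronger hypothesis $V\in L^2_{loc}$ (rather than the $L^1_{loc}$ of Theorem~\ref{Th2}) and the sign condition $V\geq0$ are essential. An equivalent and perhaps cleaner route, in the spirit of \cite{SS}, is to prove a Dunkl analogue of Kato's distributional inequality $\Delta_k|u|\geq\operatorname{Re}\big(\operatorname{sgn}(\bar u)\,\Delta_ku\big)$; combined with $\Delta_ku=(V+1)u$ and $V\geq0$ this yields $\Delta_k|u|\geq|u|\geq0$, and the positivity-preserving property of $e^{-tA_k}$ visible from the explicit kernel \eqref{KK} and the domination \eqref{trott} forces $|u|=0$. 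Either way, verifying the Kato-type inequality for $\Delta_k$, whose drift and difference parts must be treated separately, is the main obstacle.
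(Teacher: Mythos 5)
Your reduction is set up correctly: checking $C_0^\infty(\mathbb{R}^d)\subset D(L_k)$ via Corollary \ref{cor}, observing that the self-adjoint $L_k$ admits no proper symmetric extension, and reducing essential self-adjointness of the non-negative operator $\mathcal{L}_k|_{C_0^\infty}$ to $\ker\big((\mathcal{L}_k|_{C_0^\infty})^*+1\big)=\{0\}$ are all sound steps. But the argument stops exactly where the real content of the theorem lies: you never prove that a distributional solution $u\in L^2(\mathbb{R}^d,w_k(x)dx)$ of $A_ku=-(V+1)u$ belongs to $D(q_k)$, and the mollification you sketch cannot deliver it. Testing $A_ku_\eps=-\rho_\eps*_k\big((V+1)u\big)$ against $u_\eps$ gives
$$\sum_{j=1}^d\|T_ju_\eps\|_{2,k}^2=-\big\langle (V+1)u,\;\rho_\eps*_k\rho_\eps*_k u\big\rangle,$$
and since multiplication by $V$ does not commute with the Dunkl mollifier, the right-hand side has no sign: the quantity $\int V u\,\overline{(\rho_\eps*_k\rho_\eps*_k u)}\,w_k\,dx$ need not be non-negative, nor even finite a priori when $V$ is merely $L^2_{loc}$, so no uniform bound on $\sum_j\|T_ju_\eps\|_{2,k}^2$ or on $\|V^{1/2}u_\eps\|_{2,k}^2$ follows. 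Your fallback, a Dunkl analogue of Kato's inequality $\Delta_k|u|\geq\Re\big(\operatorname{sgn}(\bar u)\,\Delta_ku\big)$, would indeed close the argument, but it is not proved either, and in the Dunkl setting it is genuinely delicate: the difference parts of the $T_j$ produce terms comparing $|u|(x)-|u|(\sigma_\alpha. x)$ with $u(x)-u(\sigma_\alpha. x)$, which do not pass through the pointwise chain-rule argument that handles the local part. Since you yourself flag both routes as the ``main obstacle,'' the proposal is a program rather than a proof; in effect it asks for Kato's theorem in the Dunkl setting, which is a harder statement than the one at hand.

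It is worth seeing how the paper sidesteps this entirely, following Simon \cite{SS}: no deficiency space is ever examined. Instead one proves directly that $C_0^\infty(\mathbb{R}^d)$ is a core of the already-constructed self-adjoint form operator $L_k$, in three regularization steps. First, $D(L_k)\cap L^\infty$ is a core, via the semigroup $e^{-tL_k}$ and the range inclusion (\ref{12}) coming from the Trotter domination (\ref{trott}). Second, multiplication by radial cutoffs $\psi(\cdot/n)$ preserves $D(L_k)$ thanks to the product identity (\ref{66}), whose extra difference terms are harmless because $\psi$ is smooth and radial; this yields the compactly supported bounded core $S'$. Third, Dunkl mollification $\varphi_n=\rho_n*_k\varphi$ lands in $C_0^\infty(\mathbb{R}^d)$ and converges in graph norm, because $A_k$ commutes with $\rho_n*_k$ while, for $\varphi\in S'$, the hypothesis $V\in L^2_{loc}$ combined with the uniform bound and fixed compact support gives $\|V\varphi_n-V\varphi\|_{2,k}\to0$ --- this is precisely where $L^2_{loc}$ (as opposed to the $L^1_{loc}$ of Theorem \ref{Th2}) enters, not in controlling a deficiency vector. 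Since $L_k$ is self-adjoint and $C_0^\infty$ is a graph-norm core, the closure of $\mathcal{L}_k|_{C_0^\infty}$ is $L_k$, with no elliptic regularity and no Kato inequality needed. Rebuilding your argument along these lines would repair the gap.
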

The proof of this  theorem  is the same as the proof of Theorem \ref{Th2}. Recall that $D\subset D(L_k)$ is a core of $L_k$ if for all
$\varphi\in D(L_k)$ there exist in $D$ a sequence $(\varphi_n)_n$ such that $\|\varphi_n-\varphi\|_{2,k}\rightarrow 0$ and
$\|L_k(\varphi_n)-L_k(\varphi)\|_{2,k}\rightarrow 0$.
\begin{lem}
  $D(L_k)\cap L^\infty$ is a core of $L_k$.
\end{lem}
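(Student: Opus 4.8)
The plan is to regularize an arbitrary element of $D(L_k)$ by the heat semigroup $(e^{-tL_k})_{t>0}$ generated by $-L_k$, and to check that this regularization already lands in $D(L_k)\cap L^\infty$ while converging to the original element in the graph norm of $L_k$. Concretely, given $\varphi\in D(L_k)$, I would set $\varphi_t=e^{-tL_k}\varphi$ and aim to show that $\varphi_t\in D(L_k)\cap L^\infty$ for every $t>0$, that $\|\varphi_t-\varphi\|_{2,k}\to0$, and that $\|L_k\varphi_t-L_k\varphi\|_{2,k}\to0$ as $t\to0^+$; taking $t=1/n$ then produces the approximating sequence required by the definition of a core.

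First I would verify the $L^\infty$ membership. Since $L_k$ is positive and self-adjoint, the bounded functional calculus shows that $e^{-tL_k}$ maps $L^2(\mathbb{R}^d,w_k(x)dx)$ into $D(L_k)$ (because $\lambda\mapsto\lambda e^{-t\lambda}$ is bounded on $[0,\infty)$), so $\varphi_t\in D(L_k)$ with no restriction on $\varphi$. For the boundedness, I would invoke the domination inequality $(\ref{trott})$, namely $|e^{-tL_k}(f)|\le e^{-tA_k}(|f|)$, together with Corollary \ref{3.2}, which asserts that $e^{-tA_k}$ is bounded from $L^2$ to $L^\infty$. Applying these with $f=\varphi$ gives $\|\varphi_t\|_\infty\le\|e^{-tA_k}(|\varphi|)\|_\infty\le c_t\|\varphi\|_{2,k}<\infty$, hence $\varphi_t\in L^\infty$. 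Thus $\varphi_t\in D(L_k)\cap L^\infty$ for every $t>0$.

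Next I would handle the graph-norm convergence. The strong continuity of the semigroup at $t=0$ gives $\|\varphi_t-\varphi\|_{2,k}\to0$ directly. For the second component, since $\varphi\in D(L_k)$ the functional calculus yields $L_k\varphi_t=L_k e^{-tL_k}\varphi=e^{-tL_k}L_k\varphi$, because the functions $\lambda\mapsto e^{-t\lambda}$ and $\lambda\mapsto\lambda e^{-t\lambda}$ commute. Applying strong continuity once more, now to the vector $L_k\varphi\in L^2(\mathbb{R}^d,w_k(x)dx)$, gives $\|L_k\varphi_t-L_k\varphi\|_{2,k}=\|e^{-tL_k}(L_k\varphi)-L_k\varphi\|_{2,k}\to0$. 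Combining the three convergences shows $\varphi_t\to\varphi$ in the graph norm, which is exactly what the core criterion recalled before the statement requires.

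The main obstacle is really the $L^\infty$ step: one must be sure that the abstractly defined semigroup $e^{-tL_k}$ inherits the pointwise smoothing of the free Dunkl heat semigroup. This is precisely what the domination $(\ref{trott})$ — obtained earlier from the Trotter product formula and the positivity of the heat kernel $(\ref{KK})$ — provides, and it is the only place where the specific structure $L_k=A_k+V$ (rather than that of a general positive self-adjoint operator) is used. Everything else is routine one-parameter semigroup theory.
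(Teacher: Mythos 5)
Your proposal is correct and takes essentially the same route as the paper: both regularize $\varphi\in D(L_k)$ by $\varphi_t=e^{-tL_k}\varphi$, obtain $\varphi_t\in D(L_k)\cap L^\infty$ from the Trotter domination (\ref{trott}) together with Corollary \ref{3.2} (the paper simply cites this inclusion as already established in the proof of Lemma \ref{L2}), and prove graph-norm convergence using $L_ke^{-tL_k}\varphi=e^{-tL_k}L_k\varphi$ together with convergence as $t\to 0^+$, which the paper phrases via the spectral theorem and dominated convergence rather than your appeal to strong continuity of the semigroup. The two formulations are equivalent, so no gap remains.
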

\begin{proof}
Notice that we  have already proved that
\begin{equation}\label{12}
    Ran(e^{-tL_k})\subset D(L_k)\cap L^\infty.
\end{equation}
Let $\varphi\in D(L_k)$, by  the spectral theorem
$$\| e^{-tL_k}(\varphi) -\varphi\|_{2,k}^2=\int_0^\infty|e^{-t\lambda}-1|^2\;d(<P_\lambda \varphi ,\varphi>),$$
where $P_\lambda$ is the projection-valued measure  with respect to $L_k$. So using  the dominated convergence theorem
we obtain that $\| e^{-tL_k}(\varphi)-\varphi\|_{2,k}\rightarrow0$. Similarly,
$$\| L_ke^{-tL_k}(\varphi) -L_k(\varphi)\|_{2,k}= \| e^{-tL_k }L_k(\varphi)-L_k(\varphi)\|_{2,k}\rightarrow0.$$
Thus the lemma follows from (\ref{12}).
\end{proof}
\begin{proof}[\textbf{Proof of Theorm \ref{th2}}]
We first show for  $\varphi \in D(L_k)$ and $\psi\in C^\infty_0(\mathbb{R}^d)$ be a radial function we have that  $\varphi\psi\in D(L_k)$. Indeed, since $D(L_k)\subset D(q_k)$ then
 we already  have $\varphi\psi\in D(q_k)$ and   by a direct calculation
 \begin{eqnarray}\label{66}
\mathcal{L}_{k,dist}(\varphi \psi)&=&\mathcal{L}_{k,dist}(\varphi)\psi-2\sum_{j=1}^dT_j\varphi T_j\psi-\varphi\Delta_k\psi
\nonumber\\& +&
 \sum_{j=1}^d\sum_{\alpha\in R_+}k(\alpha) \alpha_j \frac{\Big(\varphi(x)-\varphi(\sigma_\alpha.x)\Big)\Big(T_j(\psi)(x)-T_j(\psi)(\sigma_\alpha.x)\Big)}{\langle x,\alpha\rangle}\nonumber
\\&&
 \end{eqnarray}
This is proven by showing that both sides have the same inner product with
a function in $C_0^\infty$ and  using the  density of $C_0^\infty$ in form norm.
 Since the function
  $$x\rightarrow \frac{ T_j(\psi)(x)-T_j(\psi)(\sigma_\alpha.x) }{\langle x,\alpha\rangle}$$
  is in $C^\infty_0(\mathbb{R}^d)$, it follows that  $\mathcal{L}_{k,dist}(\varphi \psi)\in L^2(\mathbb{R}^d,w_k(x)dx)$ and from  Corollary \ref{cor} we have   $\varphi\psi\in D(L_k)$.
\par Let $\psi\in C_0^\infty(\mathbb{R}^d)$ be  a radial function   with $\psi(x)=1$ near $0$ and  set $\varphi_n=\psi(./n)\varphi$.  In view of (\ref{55}) and  (\ref{66}) we get that
$\|H(\varphi_n)-H(\varphi)\|_{2,k}\rightarrow0$. This yields  that
   $$S'=\{\varphi\in D(L_k)\cap L^\infty/\; supp(\varphi) \;is\; compact \}$$
   is a core for $L_k$.
\par Now we proceed as follows. Let $\varphi\in S'$ then  $A_k\varphi+V\varphi\in L^2(\mathbb{R}^d,w_k(x)dx)$  and  $V\varphi\in  L^2(\mathbb{R}^d,w_k(x)dx)$,  since $V\in L^2_{loc}(\mathbb{R}^d,w_k(x)dx) $ and $\varphi\in L^\infty$. It follows that  $A_k\varphi\in L^2(\mathbb{R}^d,w_k(x)dx)$. However,
   if $\varphi_n= \rho_n*_k\varphi\in C^\infty_0(\mathbb{R}^d)$, where $(\rho_n)_n$ is defined in the proof of Theorem \ref{th1} and as  $\varphi \in L^\infty$ and $supp(\varphi)$ is compact then $\|V\varphi_n - V\varphi\|_{2,k}\rightarrow0$. But  by means of    Dunkl  transform we see that the
   $$A_k(\varphi_n)=\rho_n*_kA_k(\varphi)$$
 and  thus $\|A_k(\varphi_n) -A_k(\varphi)\|_{2,k}\rightarrow0$. This yields    that
$$\|\varphi_n - \varphi\|_{2,k}\rightarrow0,\quad \text{and}\quad \|\mathcal{L}_k(\rho_n)-A_k\varphi-V\varphi\|_{2,k}\rightarrow0$$
 and conclude  that $L_k$ is the closure of $\mathcal{L}_k$ on $C^\infty_0(\mathbb{R}^d)$.
  \end{proof}
 We closed this section  by    showing   that the semi-group corresponding to Schr\"{o}dinger  operator $L_k$    has
an integral kernel.
\begin{thm}\label{l1}
 $W_t= e^{-tL_k}$, $t>0$  is a kernel operator with
\begin{equation}\label{pettis}
  0\leq W_t(x,y) \leq  K_t(x,y)=\frac{1}{(2t)^{\gamma_k+d/2}c_k}e^{-(|x|^2+|y|^2)/4t} E_k(\frac{x}{\sqrt{2t}},\frac{y}{\sqrt{2t}}).
\end{equation}
  \end{thm}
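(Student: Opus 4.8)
The plan is to read off both the existence of the kernel and the two-sided bound from the pointwise domination (\ref{trott}) together with the ultracontractivity of the free Dunkl heat semigroup recorded in Corollary \ref{3.2}, and then to manufacture the kernel itself via the Dunford--Pettis theorem. I do not expect to need any new estimate: the content of the theorem is essentially a translation of the already-established operator inequality into a statement about kernels.

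First I would fix $t>0$ and show that $W_t=e^{-tL_k}$ maps $L^1(\mathbb{R}^d,w_k(x)dx)$ boundedly into $L^\infty(\mathbb{R}^d,w_k(x)dx)$. For $f\in L^1\cap L^2$ the domination (\ref{trott}) gives $|W_tf|\le e^{-tA_k}(|f|)$ pointwise, and Corollary \ref{3.2} with $p=1$ yields $\|e^{-tA_k}(|f|)\|_{\infty}\le C_t\|f\|_{1,k}$; hence $\|W_tf\|_{\infty}\le C_t\|f\|_{1,k}$. Since $L^1\cap L^2$ is dense in $L^1$, the map $W_t$ extends to a bounded operator from $L^1$ to $L^\infty$ that still agrees with $e^{-tL_k}$ on $L^1\cap L^2$. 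Because $(\mathbb{R}^d,w_k(x)dx)$ is $\sigma$-finite, the Dunford--Pettis theorem then provides a kernel $W_t(\cdot,\cdot)\in L^\infty(\mathbb{R}^d\times\mathbb{R}^d)$ with
$$e^{-tL_k}f(x)=\int_{\mathbb{R}^d}W_t(x,y)\,f(y)\,w_k(y)\,dy,\qquad f\in L^1\cap L^2 .$$

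Next I would extract the bound $0\le W_t(x,y)\le K_t(x,y)$. For the lower bound I would use that the Trotter approximants $\bigl(e^{-tA_k/n}e^{-tV/n}\bigr)^n$ of (\ref{trotter}) are positivity preserving: $e^{-tV/n}$ is multiplication by $e^{-tV/n}\ge0$, while $e^{-tA_k/n}$ has the nonnegative kernel (\ref{KK}), its nonnegativity coming from the integral representation (\ref{EV}) which forces $E_k(x,y)\ge0$ for real $x,y$. The strong limit $e^{-tL_k}$ therefore sends $f\ge0$ to a nonnegative function, which forces $W_t(x,y)\ge0$ a.e. For the upper bound I would take $f\ge0$, so that $|e^{-tL_k}f|=e^{-tL_k}f$ and (\ref{trott}) becomes
$$\int_{\mathbb{R}^d}W_t(x,y)\,f(y)\,w_k(y)\,dy\le\int_{\mathbb{R}^d}K_t(x,y)\,f(y)\,w_k(y)\,dy .$$
Letting $f$ range over a countable family of nonnegative functions and invoking $\sigma$-finiteness then yields $W_t(x,y)\le K_t(x,y)$ for a.e. $(x,y)$. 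The closed form displayed in (\ref{pettis}) is merely the kernel (\ref{KK}) of $e^{-tA_k}$ rewritten with properties (ii)--(iii) of the Dunkl kernel.

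The only genuine subtlety I anticipate is the passage from the integrated inequalities to the a.e. pointwise statements $0\le W_t\le K_t$: this needs a countable dense set of nonnegative test functions together with a Fubini/Lebesgue-density argument so that the inequality holds simultaneously for a.e. $x$ and a.e. $y$. A secondary, purely bookkeeping point is matching the normalization of the heat kernel in (\ref{KK}) with the form in (\ref{pettis}), for which I would use the scaling identity $E_k(\lambda x,\lambda y)=E_k(x,\lambda^2 y)$ contained in property (iii).
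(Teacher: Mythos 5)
Your proposal is correct and follows essentially the same route as the paper: domination \eqref{trott} plus Corollary \ref{3.2} gives $L^1\to L^\infty$ boundedness, Dunford--Pettis produces the kernel, and positivity preservation of the Trotter approximants in \eqref{trotter} yields $W_t(x,y)\ge 0$. The only difference is that where you derive the kernel inequality $W_t(x,y)\le K_t(x,y)$ by hand (testing against a countable family of nonnegative functions and a Lebesgue-density argument), the paper simply cites Theorems 2.2 and 4.3 of Schep's paper \cite{AR}, which state precisely that operator domination implies kernel domination -- your argument fills in the substance of that citation.
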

 \begin{proof}
  Recall that from (\ref{trott})
  \begin{equation*}
   |W_t(f)|=|e^{-tL_k}(f)|\leq e^{-tA_k}(|f|),\quad f\in C_0^\infty(\mathbb{R}^d).
\end{equation*}
Thus  by Corollary \ref{3.2}, $W_t$ is a bounded operator from $L^p(\mathbb{R}^d,w_k(x)dx)$  to $L^\infty$ for all $1\leq p\leq \infty$.
The theorem of Dunford and
Pettis (see for example theorem 4.2 of \cite{AR}.) asserts  that such operator is a kernel operator.
Since $e^{tA_k}$ is an integral operator with positive kernel,
it is positivity preserving. Thus using the Trotter product formula (\ref{trotter}) we have that $W_t$ is positivity preserving   which implies  that  $W_t(x,y)\geq0$. The second inequality of (\ref{pettis}) follows from  (\ref{trott}) and from \cite{AR}, Theorems 2.2 and 4.3.
 \end{proof}
\section{Dunkl harmonic oscillator}
We first recall some known facts about the Dunkl harmonic oscillator.
The reader is referred to \cite{B1, B2, A, R1}.
\par The Dunkl harmonic oscillator is the Schr\"{o}dinger operator  $H_k =  - \Delta_k +|x|^2$.
It can be expressed in terms of  generalized Hermite functions $h_n^k$,
$$H_k(f)=\sum_{n\in \mathbb{N}^d}(2|n|+\gamma_k+d)\langle f, h_n^k\rangle \; h_n^k$$
where $|n|=n_1+...+n_d$. The functions  $h_n^k$ are eigenfunctions of $H_k$ with
$$H(h_n^k)=(2|n|+\gamma_k+d)\; h_n^k$$
and form an orthonormal basis of $L^2(\mathbb{R}^d,w_k(x)dx.$
\par The holomorphic Hermite semi-group  $e^{-zH_k}$, $Re(z) >0$ is given by
\begin{equation}\label{zH}
 e^{-zH_k}(f)=\sum_{n\in \mathbb{N}^d}e^{-z(2|n|+\gamma_k+d)}\langle f, h_n^k\rangle \; h_n^k.
\end{equation}
It has the following integral representation
$$e^{-zH_k}(f)(x)=\int_{\mathbb{R}^d}\mathcal{H}_z(x,y)f(y)w_k(y)dy,$$
where from the generalized Mehler-formula,
\begin{eqnarray*}
\mathcal{H}_z(x,y)&=&\sum_{n\in \mathbb{N}^d}e^{-z(2|n|+\gamma_k+d)}  h_n^k(x)  h_n^k(y) \\
 &=&c_k\left(\frac{\sinh(2z)}{2}\right)^{-\gamma_k-\frac{d}{2} }  E_k\left(\frac{x}{\sinh(2z)},y\right)\;e^{-\frac{\coth(2z)}{2}(|x|^2+|y|^2)}.
\end{eqnarray*}
 It can be written as
\begin{eqnarray*}
\mathcal{H}_z(x,y) =c_k \sinh(2z)^{-\gamma_k  } \int_{\mathbb{R}^d} \mathcal{H}_z^0(\eta,y) \; e^{-\frac{\coth(2z)}{2}(|x|^2-|\eta|^2)}d\nu_x(\eta).
\end{eqnarray*}
where $\mathcal{H}_z^0$ is the kernel of the classical Hermite semi-group given by
\begin{eqnarray}\label{ker1}\nonumber
\mathcal{H}_z^0(x,y)&=& (2\pi\sinh(2z))^{-\frac{d}{2}}\;e^{-\frac{\coth(2z)}{2}(|x-y|^2-\tanh(t)\;\langle x,y \rangle }
\\ &=&  (2\pi\sinh(2z))^{-\frac{d}{2}}\;e^{-\frac{1}{4} (\coth(z) (|x-y|^2+\tanh(z) |x-y|^2) }\;.
\end{eqnarray}
\begin{prop}\label{prop1}
For all $z\in \mathbb{C}$, $0\leq \arg(z)\leq \omega < \pi/2$ there exist $c>0$ and $C>0$ such that
$$|\mathcal{H}_z(x,y)|\leq  \mathcal{H}_{Re(z)}\;(cx,cy).$$
   \end{prop}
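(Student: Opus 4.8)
The plan is to start from the generalized Mehler formula for $\mathcal{H}_z$, take absolute values, and compare the outcome factor by factor with $\mathcal{H}_{\Re z}(cx,cy)$. Write $a=\Re z>0$ and $b=\Im z$, so that the sector hypothesis reads $0\le b\le a\tan\omega$. Using the symmetry $E_k(x/\sinh 2z,y)=E_k(y,x/\sinh 2z)$ together with the integral representation (\ref{EV}), I would write
$$E_k\Big(\tfrac{x}{\sinh 2z},y\Big)=\int_{\mathbb R^d}e^{\langle\zeta,x\rangle/\sinh 2z}\,d\nu_y(\zeta),$$
so that, taking moduli and using that $\nu_y$ is a probability measure (and that $|\sinh 2z|^2=\sinh^2 2a+\sin^2 2b\ge\sinh^2 2a$ dominates the prefactor),
$$|\mathcal{H}_z(x,y)|\le c_k\Big(\tfrac{\sinh 2a}{2}\Big)^{-\gamma_k-\frac d2} e^{-\frac{\Re\coth 2z}{2}(|x|^2+|y|^2)}\int_{\mathbb R^d}e^{\Re(1/\sinh 2z)\langle\zeta,x\rangle}\,d\nu_y(\zeta).$$
The key observation is that, by properties (ii)--(iii) of $E_k$ and the scaling relation $\nu_{cy}=(c\,\cdot)_{*}\nu_y$, the target can be written against the \emph{same} measure:
$$\mathcal{H}_{a}(cx,cy)=c_k\Big(\tfrac{\sinh 2a}{2}\Big)^{-\gamma_k-\frac d2} e^{-\frac{c^2\coth 2a}{2}(|x|^2+|y|^2)}\int_{\mathbb R^d}e^{c^2\langle\zeta,x\rangle/\sinh 2a}\,d\nu_y(\zeta).$$
Since the prefactors coincide, it suffices to dominate the integrands pointwise in $\zeta$.

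Taking logarithms, the pointwise comparison reduces to
$$-\tfrac{A}{2}(|x|^2+|y|^2)+B\,\langle\zeta,x\rangle\le 0,\qquad A=\Re\coth 2z-c^2\coth 2a,\quad B=\Re\tfrac{1}{\sinh 2z}-\tfrac{c^2}{\sinh 2a}.$$
Because $\zeta\in\co(G.y)$ we have $|\zeta|\le|y|$, hence $|B\langle\zeta,x\rangle|\le|B|\,|x|\,|y|\le\tfrac{|B|}{2}(|x|^2+|y|^2)$, so the displayed inequality holds for all $x,y$ as soon as $|B|\le A$. A direct computation gives $\Re\coth 2z=\sinh 2a\cosh 2a/(\sinh^2 2a+\sin^2 2b)$ and $\Re(1/\sinh 2z)=\sinh 2a\cos 2b/(\sinh^2 2a+\sin^2 2b)$, and setting $\rho=\sinh^2 2a/(\sinh^2 2a+\sin^2 2b)$ the condition $|B|\le A$ is equivalent to the scalar estimate
$$\big|\rho\cos 2b-c^2\big|\le\cosh 2a\,(\rho-c^2).$$

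The main work, and the step I expect to be the real obstacle, is verifying this last inequality uniformly over the sector for a suitable $c=c(\omega)>0$. The sector hypothesis enters through $\sin^2 2b\le(2b)^2\le\tan^2\omega\,(2a)^2\le\tan^2\omega\,\sinh^2 2a$ (using $t\le\sinh t$), which yields $\rho\ge\cos^2\omega$. One of the two one-sided inequalities, $\rho\cos 2b-c^2\le\cosh 2a(\rho-c^2)$, is then automatic from $\cos 2b\le1\le\cosh 2a$ and $\rho\ge c^2$. The other, $c^2-\rho\cos 2b\le\cosh 2a(\rho-c^2)$, rearranges to
$$c^2\le F(a,b):=\rho\,\frac{\cosh 2a+\cos 2b}{1+\cosh 2a},$$
so it remains to prove $\inf F>0$. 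Now $F$ is continuous and strictly positive on $\{a>0,\ 0\le b\le a\tan\omega\}$, its numerator $\cosh 2a+\cos 2b$ vanishing only in the excluded corner $a=0,\ b=\pi/2$; moreover, along $b=a\tan\theta$ one checks $F\to\cos^2\theta\ge\cos^2\omega$ as $a\to0^{+}$ and $F\to1$ as $a\to\infty$, so a compactness argument gives $c_0^2:=\inf F>0$. Choosing any $0<c\le c_0$ (note $c_0\le\cos\omega$, which also secures $A\ge0$) then forces $|B|\le A$, and integrating the pointwise bound against $\nu_y$ proves the proposition, in fact with $C=1$. The only genuinely delicate point is this elementary but careful verification that the infimum is positive; it is the exact analogue of the classical domination estimate for the Mehler kernel of the ordinary Hermite operator, and once it is secured the Dunkl factor is handled for free through the integral representation of $E_k$.
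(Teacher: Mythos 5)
Your proof is correct, and it takes a genuinely different route from the paper's. The paper factors $\mathcal{H}_z$ through the classical Hermite--Mehler kernel $\mathcal{H}_z^0$ via the representation $\mathcal{H}_z(x,y)=c_k\,\sinh(2z)^{-\gamma_k}\int_{\mathbb{R}^d}\mathcal{H}_z^0(\eta,y)\,e^{-\frac{\coth(2z)}{2}(|x|^2-|\eta|^2)}\,d\nu_x(\eta)$; its key scalar input is Lemma \ref{rez}, the two-sided sector comparison $c\,\coth(\Re z)\le \Re(\coth z)\le C\coth(\Re z)$, applied to the Gaussian factors of $\mathcal{H}_z^0$, after which the $\nu_x$-integral is recombined into $E_k$ to produce $\mathcal{H}_t(\sqrt{c}\,x,\sqrt{c}\,y)$. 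You never leave the Dunkl kernel: writing both $|\mathcal{H}_z(x,y)|$ and the target $\mathcal{H}_a(cx,cy)$ against the \emph{same} measure $\nu_y$ (via $E_k(\lambda x,y)=E_k(x,\lambda y)$) and using $|\zeta|\le|y|$ on $\supp\nu_y$, you reduce the whole statement to the single coupled inequality $|\rho\cos 2b-c^2|\le\cosh 2a\,(\rho-c^2)$, which simultaneously controls $\Re(1/\sinh 2z)$ and $\Re(\coth 2z)$ and thus plays the role of (indeed subsumes) Lemma \ref{rez}; the paper instead absorbs the linear term by completing squares inside the classical kernel. What each buys: the paper reuses a standard bound and a reusable lemma, at the cost of the delicate recombination step; yours is self-contained, yields the cleaner conclusion with $C=1$, and isolates the analytic content in $\inf F>0$. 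The only thin spot is exactly there: a bare compactness appeal fails on an unbounded region, but it is easily made airtight without limits along rays, since $\rho\ge\cos^2\omega$ holds uniformly, and for the ratio $(\cosh 2a+\cos 2b)/(1+\cosh 2a)$ one splits into two cases: if $\cos 2b\ge 0$ it is at least $1/2$, while if $\cos 2b<0$ then $b>\pi/4$ forces $a\ge \pi/(4\tan\omega)$, whence the ratio is at least $\bigl(\cosh(\pi/(2\tan\omega))-1\bigr)/\bigl(\cosh(\pi/(2\tan\omega))+1\bigr)>0$; this gives an explicit positive lower bound for $F$ and completes your argument.
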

Let us first prove the following lemma
\begin{lem}\label{rez} If $0\leq \arg(z)\leq \omega <\pi/2$ then  there exist  $c>0$ and $C>0$ such that
\begin{eqnarray*}
  \quad c\;\coth(Re(z)) \leq Re(\coth(z))\leq C \;\coth(Re(z))
 \end{eqnarray*}
\end{lem}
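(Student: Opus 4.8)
The plan is to reduce the two-sided estimate to a single closed-form expression for $\Re(\coth z)$ and then exploit the sector hypothesis. Writing $z=x+iy$ with $x=\Re(z)$, the condition $0\le\arg(z)\le\omega<\pi/2$ forces $x>0$ together with $0\le y\le x\tan\omega$. The latter inequality is exactly the information I would use to keep the imaginary part $y$ comparable to the real part $x$; note that $\sin^2 y$ will turn out to be the only $y$-dependent quantity that survives, and it is harmless because it is bounded by $1$.

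First I would compute the real part of $\coth z$ explicitly. Multiplying numerator and denominator of $\coth z=\cosh z/\sinh z$ by $\sinh\overline{z}=\overline{\sinh z}$ and using the product-to-sum identities $\cosh(x+iy)\sinh(x-iy)=\tfrac12(\sinh 2x-i\sin 2y)$ and $\sinh(x+iy)\sinh(x-iy)=\tfrac12(\cosh 2x-\cos 2y)$, one obtains
\[
\coth(x+iy)=\frac{\sinh 2x-i\sin 2y}{\cosh 2x-\cos 2y}.
\]
Since $\cosh 2x-\cos 2y=2(\sinh^2 x+\sin^2 y)$ and $\sinh 2x=2\sinh x\cosh x$, taking real parts gives the key identity
\[
\Re(\coth z)=\frac{\sinh x\cosh x}{\sinh^2 x+\sin^2 y},
\qquad
\frac{\Re(\coth z)}{\coth(\Re(z))}=\frac{\sinh^2 x}{\sinh^2 x+\sin^2 y}.
\]

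From this identity the upper bound is immediate: the displayed ratio never exceeds $1$, so $\Re(\coth z)\le\coth(\Re(z))$ and one may take $C=1$. The content of the lemma is therefore the lower bound, which amounts to showing that $\sin^2 y\le M\,\sinh^2 x$ for some constant $M=M(\omega)$; this is the step I expect to be the main obstacle, and it is the only place where the sector hypothesis is genuinely needed. I would split into two regimes. For $x\le 1$ I use $\sin^2 y\le y^2\le x^2\tan^2\omega$ together with $\sinh x\ge x$ to get $\sin^2 y\le\tan^2\omega\,\sinh^2 x$. For $x\ge 1$ I simply use $\sin^2 y\le 1\le\sinh^{-2}(1)\,\sinh^2 x$. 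Taking $M=\max\{\tan^2\omega,\ \sinh^{-2}(1)\}$ then yields
\[
\frac{\sinh^2 x}{\sinh^2 x+\sin^2 y}\ge\frac{1}{1+M},
\]
so that $\Re(\coth z)\ge c\,\coth(\Re(z))$ with $c=(1+M)^{-1}>0$, which establishes both inequalities and completes the proof.
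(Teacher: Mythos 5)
Your proof is correct, and while it shares the paper's overall skeleton --- an explicit formula for $\Re(\coth z)$, the upper bound with $C=1$ obtained by discarding the nonnegative $y$-dependent term in the denominator, and a split between small and large real part for the lower bound --- the execution of the lower bound is genuinely different and, in fact, cleaner. The paper works in exponential form, $\Re(\coth z)=\dfrac{e^{4t}-1}{(e^{2t}-1)^2+2e^{2t}(1-\cos 2u)}$ for $z=t+iu$, restricts to $t\le a=\pi/(4\tan\omega)$ so as to control $\cos 2u$ from below, and then gets positivity of the constant from the limit of an auxiliary function $\varphi$ at $t=0$ together with continuity on the compact interval $[0,a]$ --- a compactness argument that yields no explicit constant (incidentally, that limit is $\cos^2\omega$ rather than the stated $2\cos^2\omega$; the factor of $2$ is a harmless slip since only positivity is used). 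Your normalization $\Re(\coth z)/\coth(\Re z)=\sinh^2 x/(\sinh^2 x+\sin^2 y)$ is the same identity in hyperbolic rather than exponential dress (divide the paper's numerator and denominator by $4e^{2t}$ and use $\cosh 2x-\cos 2y=2(\sinh^2 x+\sin^2 y)$), but it reduces the lemma to the single inequality $\sin^2 y\le M\sinh^2 x$, which you settle by purely elementary bounds: $\sin^2 y\le y^2\le\tan^2\omega\,x^2\le\tan^2\omega\,\sinh^2 x$ for $x\le 1$, and $\sin^2 y\le 1\le\sinh^{-2}(1)\,\sinh^2 x$ for $x\ge 1$. What this buys is fully explicit constants, $C=1$ and $c=\bigl(1+\max\{\tan^2\omega,\sinh^{-2}(1)\}\bigr)^{-1}$, with no limit or infimum argument; what the paper's route buys is essentially nothing extra here, so your version is the one I would keep.
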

\begin{proof}  Note first that for $z=t+iu$
$$Re(\coth(z))=\frac{e^{4t}-1}{(e^{2t}-1)^2+2e^{2t}(1-\cos(2u))}\leq \frac{e^{2t}+1}{e^{2t}-1}=\coth(t).$$
Now if $0\leq \arg(z)\leq \omega < \pi/2$, then
$$|u|\leq  \tan(\omega) \; t .$$
 Choosing $a= \pi/(4\tan(\omega)$, it follows that for $t\in(0,a]$, we have
$2|u|\leq  \pi/2$  and $\cos(2u) \geq \cos(2\tan(\omega) \; t)$. Then we get
 $$Re(\coth(z))\geq \frac{e^{4t}-1}{(e^{2t}-1)^2+2e^{2t}(1- \cos(2\tan(\omega)t)}. $$
If we take the function
$$\varphi(t)= \left(\frac{e^{4t}-1}{(e^{2t}-1)^2+2e^{2t}(1- \cos(2\tan(\omega)t)}\right)\; \left(\frac{e^{2t}-1}{e^{2t}+1}\right)$$
we see that
$$\lim_{t\rightarrow 0}\varphi(t)= 2\cos^2(\omega)>0$$
and   $\varphi$ define a positive   continuous function on the interval $[0,a]$, thus $\inf_{y\in (0,a]}\varphi(t)=c>0$.
Therefore, for $0<t\leq a$
 $$Re(\coth(z))\geq c\coth(t) .$$
For $t\geq a>0$ there exit $c>0$ so that
$$2e^{2t}(1-\cos(2u))\leq 4e^{2t}\leq c (e^{2t}-1)^2.$$
It follows that,
$$Re(\coth(z))\geq \frac{e^{4t}-1}{(1+c)(e^{2t}-1)^2}= c'\coth(t) .$$
The    lemma  follows.
\end{proof}
\begin{proof}[\textbf{Proof of Proposition \ref{prop1}}] This follows from (\ref{ker1}), Lemma \ref{rez} and the fact that
$$|\sinh(z)|= \sqrt{\sinh^2(t)+\sin^2(u)}\geq \sinh(t), $$
for $z=t+iu$. Indeed,
\begin{eqnarray*}
|\mathcal{H}_z^0(x,y)|& =& |  (2\pi\sinh(2z))^{-\frac{d}{2}}\;e^{-\frac{1}{4} (\coth(z) (|x-y|^2+\tanh(z) |x-y|^2) }|
\\&\leq &  (2\pi\sinh(2t))^{-\frac{d}{2}}\;e^{-\frac{c}{4} (\coth(t) (|x-y|^2+\tanh(t) |x-y|^2)}
\end{eqnarray*}
and
\begin{eqnarray*}
|\mathcal{H}_z(x,y)|& \leq &c_k \sinh(2t)^{-\gamma_k-d/2  } \int_{\mathbb{R}^d}  e^{-\frac{c}{4} (\coth(t) (|\eta-y|^2+\tanh(t) |\eta-y|^2 -\frac{\coth(2t)}{2}(|x|^2-|\eta|^2)}d\nu_x(\eta)\\
&=& c_k \sinh(2t)^{-\gamma_k-d/2  } \int_{\mathbb{R}^d}  e^{-\frac{c}{2} (\coth(2t) (|\eta-y|^2+
|x|^2-|\eta|^2) -2\tanh(t) \langle y,\eta\rangle)}d\nu_x(\eta)
\\
&=& c_k \sinh(2t)^{-\gamma_k-d/2  } e^{-\frac{c\coth(2t)}{2} (|x|^2+|y|^2)}
\int_{\mathbb{R}^d} e^{ \frac{c}{\sinh(2t)}\langle y,\eta\rangle}
 d\nu_x(\eta)
\\ &=&c_k \sinh(2t)^{-\gamma_k-d/2  } e^{-\frac{c\coth(2t)}{2} (|x|^2+|y|^2)} E_k( \frac{c}{\sinh(2t)}x,y)
\\&=& \mathcal{H}_{t}(\sqrt{c}\;x,\sqrt{c}\;y),
 \end{eqnarray*}
which is the desired inequality.
\end{proof}
\par Now, since from (\ref{pettis})
\begin{equation}\label{HK}
 0\leq \mathcal{H}_t(x,y)\leq K_t(x,y),\quad t>0,
\end{equation}
 then we can  state
\begin{cor}
  For all $z\in \mathbb{C}$, $0\leq \arg(z)\leq \omega < \pi/2$ there exist $c>0$ and $C>0$ such that
$$|\mathcal{H}_z(x,y)|\leq  K_{Re(z)}\;(cx,cy).$$
\end{cor}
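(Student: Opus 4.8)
The plan is to obtain the stated bound by simply concatenating two estimates that are already at our disposal, so that no new analysis is required. The first ingredient is Proposition \ref{prop1}, which for $z$ in the sector $0\le \arg(z)\le \omega<\pi/2$ produces a constant $c>0$ with
$$|\mathcal{H}_z(x,y)|\le \mathcal{H}_{Re(z)}(cx,cy).$$
The second ingredient is the pointwise comparison (\ref{HK}), namely $0\le \mathcal{H}_t(x,y)\le K_t(x,y)$, valid for every $t>0$ and for all $x,y\in\mathbb{R}^d$.

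First I would fix $z$ in the sector and let $c>0$ be the constant furnished by Proposition \ref{prop1}; then set $t=Re(z)>0$. Since (\ref{HK}) holds at every pair of points of $\mathbb{R}^d\times\mathbb{R}^d$, I may evaluate it at the rescaled pair $(cx,cy)$ to obtain $\mathcal{H}_{Re(z)}(cx,cy)\le K_{Re(z)}(cx,cy)$. Chaining this with the bound from Proposition \ref{prop1} gives
$$|\mathcal{H}_z(x,y)|\le \mathcal{H}_{Re(z)}(cx,cy)\le K_{Re(z)}(cx,cy),$$
which is exactly the claimed inequality (the constant $c$ is the one already produced by Proposition \ref{prop1}, and the auxiliary constant $C$ plays no role in the final estimate).

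The only points to verify are that the rescaling constant entering the corollary coincides with the one delivered by Proposition \ref{prop1} — which it does by construction — and that the comparison (\ref{HK}) is genuinely uniform in the spatial variables, so that it may be applied at the rescaled arguments $(cx,cy)$; this is guaranteed because Theorem \ref{l1} establishes $\mathcal{H}_t(x,y)\le K_t(x,y)$ for all $x,y$. There is essentially no obstacle here: all the substantive work was already carried out in Proposition \ref{prop1} (the sectorial control of the Mehler kernel via Lemma \ref{rez}) and in Theorem \ref{l1} (the heat-kernel domination coming from the Trotter product formula). The corollary is therefore a purely formal consequence of these two facts.
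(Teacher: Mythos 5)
Your proposal is correct and is essentially identical to the paper's own (implicit) proof: the paper states the corollary immediately after observing that $0\leq \mathcal{H}_t(x,y)\leq K_t(x,y)$ follows from (\ref{pettis}) (Theorem \ref{l1} applied to the potential $V(x)=|x|^2$), and then chains this with Proposition \ref{prop1} exactly as you do, evaluating the domination at the rescaled pair $(cx,cy)$. Your added remark that the constant $c$ is uniform over the sector (coming from Lemma \ref{rez}, it depends only on $\omega$) is a correct and harmless clarification.
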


\subsection{$H^\infty$-functional calculus on $L^p(\mathbb{R}^d, w_k(x)dx)$  for  Dunkl  oscillator  operator}
We briefly recall the definition of sectorial operators and their holomorphic
functional calculus. More on basic properties of sectorial operators
can be found in  \cite{ Mc, Maa, Mc1}.
 \par A closed operator $T$ on complex Hilbert space  is said to be sectorial   of type $\omega\in [0,\pi[$ if the following hold
\begin{itemize}
  \item[(i)] The spectrum $\sigma(T)\subset S_\omega=\{z\in \mathbb{C}^*,\; |Arg(z)|<\omega\}\cup\{0\}$
  \item[(ii)] For each $ \mu>\omega$  there exists $C_\mu$ such that $\|(T-zI)^{-1}\|\leq C_\mu|z|^{-1}$ for $z\notin S_\mu$.
\end{itemize}
A  non-negative  self-adjoint operator in a Hilbert space is an operator of type $S_\omega$ for all $\omega>0$.
 \\ Let $H^\infty(S_\mu^o)$ be the space of bounded holomorphic
functions in the open sector $S_\mu^o$, the interior of $S_\mu$  equipped with the
norm $\|f\|_{\infty}= \sup_{z \in S_\mu^o}|f(z)|$ and let
\begin{eqnarray*}
 \Psi(S_\mu^o)=\{\xi\in H^\infty(S_\mu^o);\;\exists\; s>0;\;  |\xi(z)|\leq C |z|^s/(1+|z|)^{2s} \}.
  \end{eqnarray*}
 For $\xi\in\Psi(S_\mu^o)$ we define the operator $\xi(T)$
\begin{equation}\label{0}
  \xi(T)=\frac{1}{2 \pi i}\int_{\gamma}\xi(z)(T-zI)^{-1}dz
\end{equation}
where $\gamma$ is the unbounded contour
\begin{equation}\label{}
  \gamma(t)=\left\{
              \begin{array}{ll}
                te^{i\theta}, & \hbox{$t\geq 0$}\\ -te^{-i\theta}, & \hbox{$t\leq 0$}
              \end{array}
            \right.
\end{equation}
 \par When  T is a one-one operator of type $\omega$  then one can  define  a   holomorphic functional calculus  as follows:
  Let $\psi $ the function   defined on $\mathbb{C}\setminus\{-1\}$ by $\psi(z) = z/(1+z)^2$. For each $\mu > \omega$ and for each  $f\in H^\infty(S_\mu^o)$ we have that $\psi$, $f\psi\in \Psi(S_\mu^o)$ and $\psi(T) $ is one-one. So $f\psi(T)$ is a bounded operator
   and $\psi(T)^{-1}$ is a closed operator. Define $f(T)$  by
\begin{equation}\label{00}
f(T)=\psi(T)^{-1}(f\psi)(T)=(I+T)^2T^{-1}(f\psi)(T).
\end{equation}
The definitions given by (\ref{0}) and (\ref{00}) are  consistent with the usual definition of polynomials of an operator.
\par We say that $T$ has   bounded  $H^\infty$-functional  calculus  if   further for all $f\in H^\infty(S_\mu^o)$  the operator $f(T)$ is bounded   and
$$\|f(T)\|\leq c_\mu\|f\|_\infty,$$
for some constant $c_\mu$.   An interesting result is given by the following
\begin{prop}[see \cite{Mc}, p.101]
If $T$ is a  positive self-adjoint
operator  then it    has a  bounded $H^\infty$- functional calculus for all $\mu > 0$ and
$$\|\xi(T)\|\leq  \|\xi\|_\infty,$$
for all $\xi\in H^\infty(S_\mu^o)$.
\end{prop}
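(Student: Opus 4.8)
The plan is to prove the proposition by identifying the holomorphic functional calculus defined by (\ref{0})--(\ref{00}) with the Borel functional calculus furnished by the spectral theorem, for which the operator-norm bound is automatic. Since $T$ is positive and self-adjoint, the spectral theorem supplies a projection-valued measure $P$ carried by $\sigma(T)\subset[0,\infty)$ with $T=\int_{[0,\infty)}\lambda\,dP_\lambda$, and by the remark recalled just above the proposition $T$ is sectorial of type $\omega$ for every $\omega>0$; in particular the resolvent estimate (ii) holds and the integral (\ref{0}) converges for $\xi\in\Psi(S_\mu^o)$. The positive half-axis lies in the closure of every sector $S_\mu^o$, so each $\lambda>0$ sits inside the contour $\gamma$, which is the crucial geometric fact.

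For $\xi\in\Psi(S_\mu^o)$ I would substitute the spectral representation of the resolvent, $(T-zI)^{-1}=\int_{[0,\infty)}(\lambda-z)^{-1}\,dP_\lambda$, into (\ref{0}). The decay $|\xi(z)|\le C|z|^s/(1+|z|)^{2s}$ renders the double integrand absolutely integrable against $d\langle P_\lambda\varphi,\varphi\rangle\,|dz|$, so Fubini's theorem applies and gives
$$\xi(T)=\int_{[0,\infty)}\Big(\frac{1}{2\pi i}\int_\gamma\frac{\xi(z)}{\lambda-z}\,dz\Big)dP_\lambda=\int_{[0,\infty)}\xi(\lambda)\,dP_\lambda,$$
where the inner integral is evaluated by the Cauchy integral formula, the orientation of $\gamma$ being fixed so that it encircles $(0,\infty)$ once; the decay bound forces the continuous extension $\xi(0)=0$, so no anomaly arises at the boundary point $\lambda=0$ even if $0\in\sigma(T)$. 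Thus $\xi(T)$ coincides with the Borel operator associated to $\xi$, and the spectral theorem yields directly
$$\|\xi(T)\|=\sup_{\lambda\in\sigma(T)}|\xi(\lambda)|\le\|\xi\|_\infty.$$

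To pass from $\Psi(S_\mu^o)$ to all of $H^\infty(S_\mu^o)$, I would use the regularizer $\psi(z)=z/(1+z)^2$ appearing in (\ref{00}). By the previous step $\psi(T)=\int\psi\,dP_\lambda$ and $(f\psi)(T)=\int f\psi\,dP_\lambda$, and by multiplicativity of the Borel calculus $f(T)=\psi(T)^{-1}(f\psi)(T)=\int_{[0,\infty)}f(\lambda)\,dP_\lambda$ on its natural domain; since $f$ is bounded this is a bounded operator with $\|f(T)\|\le\sup_{\sigma(T)}|f|\le\|f\|_\infty$. For the Dunkl oscillator $H_k$ the spectrum starts at $\gamma_k+d>0$, so $H_k$ is one-one and $\psi(H_k)$ is invertible, making the regularized definition (\ref{00}) literally applicable; for a general positive self-adjoint $T$ with $0\in\sigma(T)$ one reads $f(T)$ as the Borel operator $\int f\,dP_\lambda$, which is the common value of both definitions wherever they apply.

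The main obstacle is the consistency verification in the second paragraph: one must justify the Fubini interchange carefully and, above all, confirm that with the prescribed parametrization of $\gamma$ the inner contour integral produces exactly $\xi(\lambda)$ for each spectral point $\lambda>0$ --- that is, that $\gamma$ genuinely winds around the positive axis with the correct sign, the closing arcs near $0$ and $\infty$ contributing nothing by virtue of the $\Psi$-decay. Once this Cauchy computation is pinned down, identifying the two calculi, the norm estimate is the elementary bound for the Borel functional calculus of a self-adjoint operator and the proposition follows.
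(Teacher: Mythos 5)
The paper itself gives no proof of this proposition: it is imported verbatim from \cite{Mc} (p.~101), so there is no internal argument to compare against, and your task was effectively to reconstruct the proof behind the citation. Your reconstruction is correct, and it is in substance the standard one: for $\xi\in\Psi(S_\mu^o)$ insert the spectral representation $(T-zI)^{-1}=\int_{[0,\infty)}(\lambda-z)^{-1}\,dP_\lambda$ into (\ref{0}), interchange, and evaluate the inner integral by Cauchy's theorem to identify $\xi(T)$ with the Borel operator $\int\xi\,dP_\lambda$. The points you flagged as obstacles do go through: the Fubini hypothesis holds because $|\lambda-z|^{-1}\leq (\sin\theta)^{-1}|z|^{-1}$ uniformly in $\lambda\geq 0$ for $z\in\gamma$, while $\int_\gamma|\xi(z)|\,|z|^{-1}\,|dz|<\infty$ by the $\Psi$-decay; and with the paper's parametrization ($\gamma$ incoming along the ray $e^{-i\theta}$, outgoing along $e^{i\theta}$) the contour is the \emph{negatively} oriented boundary of the subsector containing $(0,\infty)$, but since the integrand carries $(\lambda-z)^{-1}$ rather than $(z-\lambda)^{-1}$ the two sign reversals cancel and the inner integral is exactly $+\xi(\lambda)$, the arcs at $0$ and $\infty$ vanishing by the decay, with value $0=\lim_{\lambda\to 0^+}\xi(\lambda)$ at the vertex, as you note. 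Two refinements: state the conclusion as $\|\xi(T)\|\leq\sup_{\lambda\in\sigma(T)\setminus\{0\}}|\xi(\lambda)|\leq\|\xi\|_\infty$ rather than an equality (the operator norm is an essential supremum with respect to $P$, and a general $f\in H^\infty(S_\mu^o)$ need not extend continuously to $0$, so one should work on $(0,\infty)$, which is legitimate precisely because injectivity of $T$ forces $P(\{0\})=0$); and, as you correctly observe, definition (\ref{00}) literally requires $T$ one-one, which is harmless here since $\sigma(H_k)\subset[\gamma_k+d,\infty)$. Worth noting as an alternative more aligned with the machinery the paper actually sets up: instead of invoking multiplicativity of the Borel calculus, one can pass from $\Psi(S_\mu^o)$ to $H^\infty(S_\mu^o)$ by combining the paper's closing Remark (every $\xi\in H^\infty(S_\mu^o)$ is a locally uniform limit of a uniformly bounded sequence in $\Psi(S_\mu^o)$) with the Convergence Theorem \ref{cvth}, which yields $\|\xi(T)\|\leq\sup_s\|\xi_s(T)\|\leq\sup_s\|\xi_s\|_\infty$; your route and this one buy the same constant, yours being self-contained in the spectral theorem, the other reusing the sectorial-calculus toolkit that the rest of Section~5 depends on.
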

\begin{thm}[  Convergence theorem, Th. D of \cite{Mc} ]\label{cvth}
Let T be a one-one operator of type $\omega$ on a Hilbert space and   $\{\xi_s \}$ be
a uniformly bounded sequence in  $H^\infty(S_\mu^o)$, $\mu>\omega$, which converges to a function  $\xi\in H^\infty(S_\mu^o)$  uniformly
on compact subsets of  $S_\mu^o$, such that $\{\xi_s(T)\}$ is a uniformly bounded set in  $\mathcal{L}(H)$.
Then $\xi(T)\in \mathcal{L}(H)$,  $\xi_s(T)(u)\rightarrow \xi(T)(u)$ for all $u\in H $, and 
$\|\xi(T)\|\leq \sup_{s}\|\xi_s(T)\|$.
\end{thm}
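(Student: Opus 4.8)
The plan is to reduce the whole statement to a single norm-convergence assertion for the auxiliary functions $\xi_s\psi\in\Psi(S_\mu^o)$, where $\psi(z)=z/(1+z)^2$, and then to transport it through the defining formula (\ref{00}). First I fix a ray angle $\theta$ with $\omega<\theta<\mu$ and let $\gamma$ be the contour in (\ref{0}) supported on $\{|\arg z|=\theta\}$. Choosing $\nu$ with $\omega<\nu<\theta$, every $z\in\gamma$ lies outside $S_\nu$, so hypothesis (ii) for type $\omega$ gives $\norm{(T-zI)^{-1}}\leq C|z|^{-1}$ on $\gamma$, while $\abs{\psi(z)}\leq C'|z|/(1+|z|)^2$ there. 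Since $\{\xi_s\}$ is uniformly bounded in $H^\infty(S_\mu^o)$, say by $B$, the $\mathcal{L}(H)$-valued integrands
\[
[\xi_s(z)-\xi(z)]\,\psi(z)\,(T-zI)^{-1}
\]
are dominated in norm by the $s$-independent, $\gamma$-integrable function $z\mapsto C''(1+|z|)^{-2}$, and they tend to $0$ at every $z\in\gamma$ because $\xi_s\to\xi$ uniformly on compact subsets of $S_\mu^o$. The vector-valued dominated convergence theorem then yields
\[
(\xi_s\psi)(T)\longrightarrow(\xi\psi)(T)\qquad\text{in }\mathcal{L}(H).
\]
I expect this dominated-convergence step to be the crux of the argument: it is precisely here that the decay built into $\Psi(S_\mu^o)$, the resolvent estimate on $\gamma$, and \emph{both} hypotheses on $\{\xi_s\}$ (the uniform $H^\infty$-bound and the convergence on compacta) combine to furnish an integrable majorant independent of $s$.

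Next I exploit that $\psi\in\Psi(S_\mu^o)$, so $\psi(T)$ is a \emph{bounded} one-one operator, and that by the very definition (\ref{00}) one has $\psi(T)\,\xi_s(T)=(\xi_s\psi)(T)$, these bounded functions of $T$ commuting. The range $\mathrm{Ran}(\psi(T))$ is dense, since a one-one operator of type $\omega$ on a Hilbert space has dense range and $\psi(T)=T(I+T)^{-2}$. Hence for $u=\psi(T)w\in\mathrm{Ran}(\psi(T))$,
\[
\xi_s(T)u=(\xi_s\psi)(T)w\longrightarrow(\xi\psi)(T)w .
\]
Writing $M:=\sup_s\norm{\xi_s(T)}$, an $\eps/3$ approximation promotes this to convergence on all of $H$: for arbitrary $u\in H$ and $u_n\to u$ with $u_n\in\mathrm{Ran}(\psi(T))$, the estimate $\norm{\xi_s(T)u-\xi_t(T)u}\leq 2M\norm{u-u_n}+\norm{(\xi_s(T)-\xi_t(T))u_n}$ shows $(\xi_s(T)u)_s$ is Cauchy. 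Denoting its limit $Lu$, we get $L\in\mathcal{L}(H)$ with $\norm{Lu}=\lim_s\norm{\xi_s(T)u}\leq M\norm u$, so $\norm L\leq\sup_s\norm{\xi_s(T)}$.

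It remains to identify $L$ with $\xi(T)$ as given by (\ref{00}). Passing to strong limits in $\xi_s(T)\psi(T)=\psi(T)\xi_s(T)$ and using that $\psi(T)$ is bounded gives $L\psi(T)=(\xi\psi)(T)=\psi(T)L$. Consequently, for every $u\in H$,
\[
(\xi\psi)(T)u=\psi(T)(Lu)\in\mathrm{Ran}(\psi(T)),
\]
so $u\in D(\xi(T))$ and, using injectivity of $\psi(T)$,
\[
\xi(T)u=\psi(T)^{-1}(\xi\psi)(T)u=\psi(T)^{-1}\psi(T)Lu=Lu .
\]
Thus $D(\xi(T))=H$, $\xi(T)=L\in\mathcal{L}(H)$ with $\norm{\xi(T)}\leq\sup_s\norm{\xi_s(T)}$, and $\xi_s(T)u\to\xi(T)u$ for every $u\in H$, which is the assertion. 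Beyond the dominated-convergence step, the only delicate points are the density of $\mathrm{Ran}(\psi(T))$ and the a priori possible unboundedness of $\xi(T)$; the latter is resolved exactly by the commutation identity $L\psi(T)=\psi(T)L$, which forces $D(\xi(T))$ to be all of $H$.
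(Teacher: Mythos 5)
Your proof is correct. Note that the paper itself gives no proof of this statement: it is imported verbatim as Theorem D of the Albrecht--Duong--McIntosh lectures \cite{Mc}, and your argument is essentially the standard proof from that source --- regularize by $\psi(z)=z/(1+z)^2$, get $(\xi_s\psi)(T)\to(\xi\psi)(T)$ in operator norm by dominated convergence along the contour (the uniform $H^\infty$-bound supplies the $s$-independent majorant $C(1+|z|)^{-2}$, the convergence on compacta the pointwise limit), then combine the density of $\operatorname{Ran}\psi(T)$ with $\sup_s\|\xi_s(T)\|<\infty$ to extract the strong limit $L$, and finally use the commutation identity $\psi(T)L=(\xi\psi)(T)$ to conclude $\operatorname{Ran}(\xi\psi)(T)\subset\operatorname{Ran}\psi(T)$, hence $\xi(T)=L$ on all of $H$. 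The only assertion you use without proof is that an injective operator of type $\omega$ on a Hilbert space has dense range (whence $\operatorname{Ran}\psi(T)$ is dense); this is the standard decomposition $H=N(T)\oplus\overline{R(T)}$ valid on reflexive spaces, and citing it is legitimate here.
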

 \par  Now  Assume that an operator $T$ has a bounded $H^\infty$-functional calculus on the Hilbert space $L^2(\mathbb{R}^d,w_k(x)dx)$.  We say that $T$   has a
bounded $H^\infty$-functional calculus on $L^p(\mathbb{R}^d,w_k(x)dx)$  for $1<p<\infty$, if  for all $\xi\in H^\infty(S_\mu^o)$  the operator $\xi(T)$ can  be extended to a bounded operator on $L^p(\mathbb{R}^d,w_k(x)dx)$  that is,
$$\|\xi(T)(u)\|_p \leq c \|u\|_p,$$
for all $u\in L^p(\mathbb{R}^d,w_k(x)dx) $.
 \par  We conclude   with   the following  remark.
\begin{rem}
 A function $\xi\in H^\infty(S_\mu^o) $  is the limit of a uniformly bounded sequence of functions in $  \Psi(S_\mu^o)$
 in the sense of uniform convergence on compact subsets of $S_\mu^o$.
\end{rem}
\subsection{The main result }
  Our main result in this section is the following theorem.
\begin{thm}\label{mth}
The Dunkl harmonic oscillator  operator   $H_k$   has a
bounded $H^\infty$-functional calculus on $L^p(\mathbb{R}^d,w_k(x)dx)$  for $1<p<\infty$.  Moreover,
for each  $\xi\in  H^\infty(S_\mu^0)$, $0<\mu<\pi$, the operator $\xi(L_k)$ is of weak type
(1, 1).
\end{thm}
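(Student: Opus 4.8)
The plan is to combine the functional calculus already available on the Hilbert space with Calder\'on--Zygmund theory on the space of homogeneous type $(\mathbb{R}^d,|\cdot|,w_k(x)dx)$. First I record the two soft ingredients. The measure $w_k(x)dx$ is doubling with respect to Euclidean balls, so $(\mathbb{R}^d,|\cdot|,w_k(x)dx)$ is a space of homogeneous type; write $V(x,r)=\int_{B(x,r)}w_k(y)dy$. Since $H_k\ge(\gamma_k+d)I>0$ is positive, self-adjoint and one-one, the Proposition quoted above gives $\|\xi(H_k)\|_{2\to2}\le\|\xi\|_\infty$ for every $\mu>0$ and $\xi\in H^\infty(S_\mu^o)$, and the two definitions (\ref{0}) and (\ref{00}) of $\xi(H_k)$ agree. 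By the Remark together with the Convergence Theorem \ref{cvth}, it is enough to prove the $L^p$ and weak type $(1,1)$ bounds for $\xi\in\Psi(S_\mu^o)$ with a constant depending only on $\mu$ and $\|\xi\|_\infty$; an arbitrary $\xi\in H^\infty(S_\mu^o)$ is then reached through a uniformly bounded approximating sequence in $\Psi(S_\mu^o)$ converging uniformly on compacta.

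Next I would realize $\xi(H_k)$, for $\xi\in\Psi(S_\mu^o)$ with $\mu<\pi/2$, as an integral operator whose kernel is built from the holomorphic Hermite semigroup. Deforming the contour in (\ref{0}) into the sector of holomorphy of $z\mapsto e^{-zH_k}$ and using the Laplace representation $(H_k-z)^{-1}=\int_0^\infty e^{tz}e^{-tH_k}\,dt$ (valid for $\Re z<\gamma_k+d$), one obtains
$$\xi(H_k)f(x)=\int_{\mathbb{R}^d}K_\xi(x,y)f(y)w_k(y)dy,\qquad K_\xi(x,y)=\int_\Gamma \Phi_\xi(z)\,\mathcal{H}_z(x,y)\,dz,$$
where $\Gamma$ is a ray inside the right half-plane and $\Phi_\xi$ inherits the decay of $\xi\in\Psi$. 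The size control is furnished by the Corollary following Proposition \ref{prop1}: for $0\le\arg z\le\omega<\pi/2$ one has $|\mathcal{H}_z(x,y)|\le K_{\Re(z)}(cx,cy)$, so everything is reduced to the free Dunkl heat kernel $K_t$ of (\ref{KK}), which satisfies a Gaussian-type upper bound
$$0\le K_t(x,y)\le \frac{C}{V(x,\sqrt t)}\exp\Big(-\frac{d(x,y)^2}{ct}\Big),\qquad d(x,y)=\min_{g\in G}|x-g.y|.$$
Integrating $\Phi_\xi$ against this bound along $\Gamma$ produces the standard size estimate $|K_\xi(x,y)|\le C_\mu\|\xi\|_\infty\,V(x,d(x,y))^{-1}$.

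The technical heart of the proof --- and the step I expect to be the main obstacle --- is the H\"ormander regularity estimate
$$\int_{d(x,y)\ge2d(y,y_0)}\big|K_\xi(x,y)-K_\xi(x,y_0)\big|\,w_k(x)\,dx\le C_\mu\|\xi\|_\infty,$$
uniform in $y,y_0$. To obtain it I would prove gradient bounds for the holomorphic kernel, namely $|\nabla_y\mathcal{H}_z(x,y)|\lesssim(\Re z)^{-1/2}K_{c\Re(z)}(c'x,c'y)$ (and the analogue for the difference part of the Dunkl operators acting in $y$), mimicking the computation behind Proposition \ref{prop1} from the explicit Mehler formula, and then integrate in $z$ along $\Gamma$ using the decay of $\Phi_\xi$. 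The delicacy is entirely caused by the difference terms in the Dunkl derivatives and by the orbit distance $d$: these are what make the regularity estimate more involved than in the classical Hermite case, and they are the reason Proposition \ref{prop1} and its corollary, rather than a crude pointwise Gaussian bound, are the right tools.

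With the size and regularity conditions established, $\xi(H_k)$ is a Calder\'on--Zygmund operator on $(\mathbb{R}^d,|\cdot|,w_k(x)dx)$ which is bounded on $L^2$; the Calder\'on--Zygmund theorem on spaces of homogeneous type then yields the weak type $(1,1)$ bound and, by interpolation with the $L^2$ bound, boundedness on $L^p$ for $1<p\le2$, all with constants $C_\mu\|\xi\|_\infty$. The range $2\le p<\infty$ follows by duality since $H_k$ is self-adjoint and $\overline\xi(H_k)=\xi(H_k)^\ast$, and a final application of the Convergence Theorem \ref{cvth} upgrades the bounds from $\Psi(S_\mu^o)$ to all of $H^\infty(S_\mu^o)$. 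This proves the bounded $H^\infty$-calculus of $H_k$ on $L^p$ for $1<p<\infty$; the weak type $(1,1)$ assertion for $\xi(L_k)$ is obtained by the same scheme, using the semigroup domination (\ref{trott}) and the kernel bound (\ref{pettis}) of Theorem \ref{l1} in place of the Mehler estimates.
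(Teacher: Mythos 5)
Your opening reduction (self-adjointness gives the $L^2$ bound, then the Convergence Theorem \ref{cvth} reduces everything to $\xi\in\Psi(S_\mu^o)$) matches the paper, but from there you take the classical Calder\'on--Zygmund route: you want a pointwise size bound $|K_\xi(x,y)|\le C\,V(x,d(x,y))^{-1}$ with $d(x,y)=\min_{g\in G}|x-g.y|$, plus a H\"ormander regularity estimate obtained from gradient bounds on the Mehler kernel $\mathcal{H}_z$. The paper deliberately avoids exactly this. Following Duong--Robinson, it proves the weak type $(1,1)$ with \emph{no} smoothness of the kernel: after the CZ decomposition it splits $h=h_1+h_2$ with $h_1=\sum_j e^{-t_jH_k}f_j$, $t_j=r_j^2$, handles $h_1$ by the sup/inf heat-kernel comparison of Lemma \ref{l2} together with the Dunkl maximal function, and handles $h_2$ by replacing $\xi$ with $\xi_j(v)=\xi(v)(1-e^{-t_jv})$, representing $\xi_j(H_k)$ by contour integration against the semigroup, and using Proposition \ref{prop1} together with the \emph{integrated} off-orbit decay of Lemma \ref{lem1}. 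The cancellation $1-e^{-t_jv}$ at the scale of each ball is the substitute for kernel regularity.

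This divergence matters because the two estimates at what you yourself call the ``technical heart'' of your argument are asserted rather than proved, and neither follows from the tools you cite. First, the bound $K_t(x,y)\le C\,V(x,\sqrt t)^{-1}\exp(-d(x,y)^2/ct)$ is not a consequence of the Corollary to Proposition \ref{prop1}: the kernel (\ref{KK}) carries the normalization $t^{-\gamma_k-d/2}$, and $V(x,\sqrt t)\sim t^{\gamma_k+d/2}$ fails near the reflecting hyperplanes (the weight $w_k$ is doubling but not Ahlfors regular); the paper only has, in the proof of Lemma \ref{lem1}, the weaker factorization $K_t(x,y)\le e^{-d(x,y)^2/2t}\,\tau_x(k_{2t})(-y)$, which is useful precisely because it is integrated in $y$, not pointwise in the form you need. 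Second, the gradient bound $|\nabla_y\mathcal{H}_z(x,y)|\lesssim(\Re z)^{-1/2}K_{c\Re z}(c'x,c'y)$ and its analogue for the difference parts of the Dunkl operators do not come from ``mimicking'' Proposition \ref{prop1}: differentiating the Mehler formula in $y$ produces derivatives of the Dunkl kernel $E_k$, which are not dominated by $E_k$ itself at the orbit-distance scale, and the difference quotients $\big(\mathcal{H}_z(x,y)-\mathcal{H}_z(x,\sigma_\alpha.y)\big)/\langle\alpha,y\rangle$ are singular along hyperplanes in a way no Gaussian in $d(x,y)$ absorbs. There is also a structural issue: $d$ is only a pseudometric (it vanishes on whole $G$-orbits), so $K_\xi$ is singular on all of $\bigcup_{g\in G}\{x=g.y\}$ and your CZ conditions would have to be formulated on the orbit space, which you do not address. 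Finally, your closing claim that the same scheme handles a general $\xi(L_k)$ via (\ref{trott}) and (\ref{pettis}) cannot survive the regularity step, since no Mehler-type formula is available for general $V$; the paper's proof concerns $H_k$ throughout. In short, the skeleton is sensible but the H\"ormander estimate is a genuine gap, and the paper's semigroup cancellation argument exists precisely to avoid having to prove it.
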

From this theorem we recover and extend the result of   \cite{B2} where a particular case is dealt with $\xi(z)=z^{ia}, a\in \mathbb{R}$. For the proof we follow the elegant approach of \cite{DR}.
\par  Let $k_t$ and $K_t$ the kernels given by (\ref{kk}) and (\ref{KK}). Recall that $$K_t(x,y)=\tau_x(k_t)(-y).$$
\begin{lem} \label{l2}
There exists $c>0$ such that for all $t>0$ and $z\in \mathbb{R}^d$,
\begin{equation}\label{}
    \sup_{y\in B(z,\sqrt{t})} K_t(x,y)\leq c \;\inf_{y\in B(z,\sqrt{t})}K_{2t}(x,y)
\end{equation}
   \end{lem}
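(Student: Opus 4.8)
The plan is to reduce the statement to an elementary pointwise comparison of Gaussians by exploiting the positive integral representation of $K_t$. First I would rewrite the kernel: combining (\ref{KK}) with the integral formula (\ref{EV}) for the Dunkl kernel and the scaling property $E_k(2x/t,y)=E_k(x,2y/t)$, one obtains
\begin{equation*}
K_t(x,y)=t^{-\gamma_k-d/2}\int_{\mathbb{R}^d}e^{-(|x|^2+|y|^2-2\langle\eta,y\rangle)/t}\,d\nu_x(\eta)
=t^{-\gamma_k-d/2}\int_{\mathbb{R}^d}e^{-(|y-\eta|^2+|x|^2-|\eta|^2)/t}\,d\nu_x(\eta),
\end{equation*}
an average of Gaussians against the probability measure $\nu_x$. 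The decisive structural fact is that $\nu_x$ is supported in $\co(G.x)$; since $G\subset O(\mathbb{R}^d)$ every point of the orbit $G.x$ has norm $|x|$, so its convex hull lies in the ball $\{|\eta|\le|x|\}$, whence $|x|^2-|\eta|^2\ge0$ on $\supp\nu_x$.

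Next I would fix $x$, $z$, $t$ and two points $y_1,y_2\in B(z,\sqrt t)$, so that $|y_1-y_2|\le2\sqrt t$, and compare the integrands of $K_t(x,y_1)$ and $K_{2t}(x,y_2)$ pointwise in $\eta$. Writing $a=|y_1-\eta|$ and $b=|y_1-y_2|$, the logarithm of the ratio of the two Gaussians (the prefactors contributing only the constant $2^{\gamma_k+d/2}$) equals
\begin{equation*}
E(\eta)=\frac{|y_2-\eta|^2+|x|^2-|\eta|^2}{2t}-\frac{|y_1-\eta|^2+|x|^2-|\eta|^2}{t}.
\end{equation*}
Discarding the term $-\tfrac{1}{2t}(|x|^2-|\eta|^2)\le0$ and using $|y_2-\eta|^2\le(a+b)^2$, one finds
\begin{equation*}
E(\eta)\le\frac{1}{2t}\bigl(|y_2-\eta|^2-2a^2\bigr)\le\frac{1}{2t}\bigl(2b^2-(a-b)^2\bigr)\le\frac{b^2}{t}\le4.
\end{equation*}
Hence, pointwise in $\eta$, $t^{-\gamma_k-d/2}e^{-(|y_1-\eta|^2+|x|^2-|\eta|^2)/t}\le c\,(2t)^{-\gamma_k-d/2}e^{-(|y_2-\eta|^2+|x|^2-|\eta|^2)/(2t)}$ with $c=2^{\gamma_k+d/2}e^4$.

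Finally, integrating this inequality against the probability measure $d\nu_x(\eta)$ (both sides being nonnegative) gives $K_t(x,y_1)\le c\,K_{2t}(x,y_2)$ for all $y_1,y_2\in B(z,\sqrt t)$; taking the supremum over $y_1$ and the infimum over $y_2$ yields the claim, with $c$ independent of $x,z,t$. The only genuine obstacle is the step where $|x|^2-|\eta|^2$ is discarded: comparing the kernel at the two different time scales $t$ and $2t$ forces this term to appear with a net negative coefficient, and it can be thrown away only because the support property of $\nu_x$ guarantees $|\eta|\le|x|$. Everything else rests on the elementary identity $2b^2-(a-b)^2\le2b^2\le8t$.
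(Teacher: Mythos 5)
Your proof is correct and takes essentially the same route as the paper: the same integral representation $K_t(x,y)=t^{-\gamma_k-d/2}\int_{\mathbb{R}^d}e^{-(|y-\eta|^2+|x|^2-|\eta|^2)/t}\,d\nu_x(\eta)$, the same pointwise Gaussian comparison for $y_1,y_2\in B(z,\sqrt t)$ using $|y_1-y_2|\le 2\sqrt t$, and the same discarding of the $|x|^2-|\eta|^2$ term at the larger time scale --- a step the paper leaves implicit and you rightly justify via $\supp\nu_x\subset\co(G.x)\subset\{|\eta|\le|x|\}$. The only cosmetic difference is your identity $(a+b)^2-2a^2=2b^2-(a-b)^2$ in place of the paper's bound $|y_2-\eta|^2\le 2|y_1-\eta|^2+2|y_1-y_2|^2$; both yield the same constant $2^{\gamma_k+d/2}e^4$.
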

\begin{proof}
  Recall that
   $$\tau_x(k_t)(-y)=t^{-\gamma_k-d/2}\int_{\mathbb{R}^d}e^{-(|y-\eta|^2+|x|^2-|\eta|^2)/t}d\nu_x(\eta)$$
Let $y_1,y_2\in B(z,\sqrt{t})$ and $\eta \in \mathbb{R}^d$ we have
$$\frac{|y_2-\eta |^2}{2t}\leq \frac{|y_1-\eta |^2+|y_2-y_1|^2}{t}\leq \frac{|y_1-\eta |^2}{t}+4. $$
It follows that
$$t^{-\gamma_k+d/2}e^{-|y_1-\eta |^2/t}\leq e^{4}2^{\gamma_k+d/2}(2t)^{-\gamma_k-d/2}e^{-|y_2-\eta |^2/2t} .$$
Now, since we  have
$$e^{-( |x|^2-|\eta|^2)/t}\leq e^{-( |x|^2-|\eta|^2)/2t}$$
thus for all $y_1,y_2\in B(z,\sqrt{t})$
$$\tau_x(k_t)(-y_1)\leq c \tau_x(k_{2t})(-y_2),$$
which yields the desired inequality.
\end{proof}
For $f\in L^1_{loc}(\mathbb{R}^d,w_k(x)dx)$,
the Dunkl maximal function $M_kf$ is defined by
 $$M_kf(x)=\sup_{r>0}\frac{1}{d_kr^{2\gamma_k+d}}|f*_k\chi_{B_r}|$$
where  $\chi_{B_r}$
is the characteristic function of the ball $B_r$ of radius $r$ centered at $0$.
According to the theorem 6.2 of \cite{XT}.
\begin{lem}\label{Mk}
There exists   $c>0$  such that
  $$\sup_{t>0}|k_t*f(x)|\leq c M_kf(x).$$
\end{lem}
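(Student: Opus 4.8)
The plan is to exploit that the heat kernel $k_t$ is radial and radially \emph{decreasing}, so that it can be written as a superposition of indicator functions of balls, and then to push this decomposition through the positive Dunkl convolution. Write $k_t(x)=\psi_t(|x|)$ with $\psi_t(s)=t^{-\gamma_k-d/2}e^{-s^2/t}$, which is decreasing in $s$. The layer-cake formula then gives
$$
k_t(x)=\int_0^\infty \chi_{B_r}(x)\,\bigl(-\psi_t'(r)\bigr)\,dr,
$$
a representation with \emph{non-negative} density $-\psi_t'(r)=\tfrac{2r}{t}\psi_t(r)\ge0$. Since the convolution kernel $\tau_x(k_t)(-y)=t^{-\gamma_k-d/2}\int_{\mathbb{R}^d}e^{-(|y-\eta|^2+|x|^2-|\eta|^2)/t}\,d\nu_x(\eta)$ is manifestly non-negative, I would first reduce to $f\ge0$ through the pointwise bound $|k_t*_kf(x)|\le k_t*_k|f|(x)$; the same positivity gives $|f*_k\chi_{B_r}|\le |f|*_k\chi_{B_r}$, so it suffices to estimate $k_t*_k|f|$ against $M_k|f|$.

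Next I would insert the layer-cake decomposition inside the convolution. Because the radial translation admits the positive representation (\ref{trad}), the identity for $k_t$ transfers to its translate, and by Tonelli's theorem all the (non-negative) integrals interchange freely, yielding
$$
k_t*_k|f|(x)=\int_0^\infty \bigl(\chi_{B_r}*_k|f|\bigr)(x)\,\bigl(-\psi_t'(r)\bigr)\,dr.
$$
By the very definition of the Dunkl maximal function one has $\chi_{B_r}*_k|f|(x)\le d_k r^{2\gamma_k+d}\,M_k|f|(x)$, hence
$$
k_t*_k|f|(x)\le\Bigl(d_k\int_0^\infty r^{2\gamma_k+d}\bigl(-\psi_t'(r)\bigr)\,dr\Bigr)\,M_k|f|(x).
$$
Integrating the bracketed integral by parts (the boundary terms vanish by the Gaussian decay of $\psi_t$ and by $2\gamma_k+d>0$) and then substituting $u=r^2/t$ collapses it to $d_k\,\tfrac{2\gamma_k+d}{2}\,\Gamma(\gamma_k+d/2)$, a constant independent of $t$. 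Taking the supremum over $t>0$ then produces the claimed inequality with a $t$-independent constant $c$.

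The main obstacle is the justification that the scalar layer-cake identity for $k_t$ may be carried inside the Dunkl convolution, i.e.\ that $\tau_x(k_t)(-y)=\int_0^\infty \tau_x(\chi_{B_r})(-y)\,(-\psi_t'(r))\,dr$, since the Dunkl translation is not an ordinary convolution. This rests entirely on the positivity of $\tau_x$ on radial functions supplied by (\ref{trad}) together with the monotone convergence theorem; once positivity is secured, every interchange is legitimized by Tonelli and no cancellation can occur, so the reduction to $M_k|f|$ (and thus to the stated bound) is rigorous.
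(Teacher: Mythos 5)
Your argument is essentially sound, but note that the paper does not actually prove this lemma: it is quoted directly from Theorem 6.2 of \cite{XT} (Thangavelu--Xu), with no argument given. Your proposal therefore supplies a self-contained proof of the cited result, and it is the standard one: the layer-cake decomposition $k_t(x)=\int_0^\infty \chi_{B_r}(x)\,(-\psi_t'(r))\,dr$ with nonnegative density, its transfer through $\tau_x$ via the positive radial representation (\ref{trad}) (valid for $\chi_{B_r}$ by monotone approximation of the indicator by radial continuous functions, as you indicate) together with Tonelli, and the evaluation $d_k\int_0^\infty r^{2\gamma_k+d}(-\psi_t'(r))\,dr=d_k\,\tfrac{2\gamma_k+d}{2}\,\Gamma(\gamma_k+d/2)$, which is indeed independent of $t$ after integration by parts and the substitution $u=r^2/t$. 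All of these steps check out; what your proof buys over the paper's citation is that the reader sees exactly where the positivity of the Dunkl translation on radial functions enters, which is the only non-classical ingredient.

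One slip should be repaired. The paper's maximal function places the absolute value \emph{outside} the convolution, $M_kf(x)=\sup_{r>0}(d_kr^{2\gamma_k+d})^{-1}|f*_k\chi_{B_r}(x)|$, and since $\tau_x(\chi_{B_r})(-\cdot)$ is not itself an indicator, one only has $M_kf\le M_k|f|$, with possibly strict inequality. Your opening reduction to $f\ge0$ therefore proves $\sup_{t>0}|k_t*_kf(x)|\le c\,M_k|f|(x)$, which is formally weaker than the stated bound by $M_kf(x)$. The fix costs nothing: keep $f$ signed, justify Fubini in the $(r,y)$ double integral by your Tonelli computation applied to $|f|$ (finite for a.e.\ $x$, since $k_t*_k|f|$ is finite a.e.), and then estimate $|k_t*_kf(x)|\le\int_0^\infty\bigl|(\chi_{B_r}*_kf)(x)\bigr|\,(-\psi_t'(r))\,dr\le d_k\,M_kf(x)\int_0^\infty r^{2\gamma_k+d}(-\psi_t'(r))\,dr$, taking the absolute value inside the $r$-integral rather than inside the convolution. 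With this rearrangement your constant and conclusion are exactly those of the lemma.
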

\begin{lem}\label{lem1}
For all $\delta>0$ there exists a constant $c>0$ so that for all $r>0$ and We have that
$$\sup_{x\in \mathbb{R}^d}\int_{\min_{g\in G}| y-g.x| > r }K_t(x,y)w_k(y)dy\leq c(1+r^2t^{-1})^{-\delta}.$$
\end{lem}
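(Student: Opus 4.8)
The plan is to reduce the statement to a pointwise Gaussian bound for $K_t$ in terms of the orbit distance $\rho(x,y):=\min_{g\in G}|y-g.x|$, and then to integrate it using the fact that $K_{2t}(x,\cdot)$ has finite (indeed constant) mass against $w_k$. I would start from the integral representation recorded in the proof of Lemma~\ref{l2}, namely
$$K_t(x,y)=t^{-\gamma_k-d/2}\int_{\mathbb{R}^d}e^{-(|y-\eta|^2+|x|^2-|\eta|^2)/t}\,d\nu_x(\eta),$$
and rewrite the exponent as $|y-\eta|^2+|x|^2-|\eta|^2=|y|^2+|x|^2-2\langle y,\eta\rangle=:A$.

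The key step, and the one I expect to be the real obstacle, is the geometric inequality
$$A=|y|^2+|x|^2-2\langle y,\eta\rangle\ \ge\ \rho(x,y)^2\qquad(\eta\in\supp\nu_x).$$
This is exactly where the two factors $e^{-|y-\eta|^2/t}$ and $e^{-(|x|^2-|\eta|^2)/t}$ must be exploited together: for an $\eta$ near the interior of $\co(G.x)$ the Gaussian $e^{-|y-\eta|^2/t}$ alone carries no decay even when $y$ is far from the orbit, but such an $\eta$ has $|\eta|<|x|$, and the compensating factor restores the decay. Concretely, since $\supp\nu_x\subset\co(G.x)$, I would write $\eta=\sum_{g}\lambda_g\,g.x$ with $\lambda_g\ge0$ and $\sum_g\lambda_g=1$, so that $\langle y,\eta\rangle\le\max_{g\in G}\langle y,g.x\rangle$; using $|g.x|=|x|$ this gives $A\ge\min_{g\in G}|y-g.x|^2=\rho(x,y)^2$, as claimed.

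Granting this, I split $e^{-A/t}=e^{-A/2t}\,e^{-A/2t}\le e^{-\rho(x,y)^2/2t}\,e^{-A/2t}$ and recognize that $t^{-\gamma_k-d/2}\int_{\mathbb{R}^d}e^{-A/2t}\,d\nu_x(\eta)=2^{\gamma_k+d/2}K_{2t}(x,y)$ by the same representation at time $2t$. This produces the pointwise bound
$$K_t(x,y)\le 2^{\gamma_k+d/2}\,e^{-\rho(x,y)^2/2t}\,K_{2t}(x,y).$$

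To conclude, on the set $\{\rho(x,y)>r\}$ I bound $e^{-\rho(x,y)^2/2t}\le e^{-r^2/2t}$ and integrate:
$$\int_{\rho(x,y)>r}K_t(x,y)\,w_k(y)\,dy\le 2^{\gamma_k+d/2}e^{-r^2/2t}\int_{\mathbb{R}^d}K_{2t}(x,y)\,w_k(y)\,dy.$$
The remaining integral equals $\int_{\mathbb{R}^d}k_{2t}(y)\,w_k(y)\,dy=\int_{\mathbb{R}^d}e^{-|z|^2}w_k(z)\,dz=:C_0<\infty$ for every $x$, since the Dunkl translation preserves the integral against $w_k$ and $w_k$ is even, and a rescaling removes the $t$-dependence. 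Finally I convert the Gaussian factor into the stated polynomial decay: for each $\delta>0$ there is $c_\delta>0$ with $e^{-s/2}\le c_\delta(1+s)^{-\delta}$ for all $s\ge0$; taking $s=r^2/t$ yields $e^{-r^2/2t}\le c_\delta(1+r^2t^{-1})^{-\delta}$. Since every estimate above is uniform in $x$, taking the supremum over $x$ gives the claim with $c=2^{\gamma_k+d/2}C_0\,c_\delta$.
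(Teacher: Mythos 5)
Your proof is correct and takes essentially the same route as the paper's: the same integral representation $K_t(x,y)=t^{-\gamma_k-d/2}\int e^{-(|x|^2+|y|^2-2\langle y,\eta\rangle)/t}\,d\nu_x(\eta)$, the same key geometric inequality $|x|^2+|y|^2-2\langle y,\eta\rangle\ge \min_{g\in G}|y-g.x|^2$ for $\eta\in\co(G.x)$, the same splitting of the Gaussian to extract $e^{-r^2/2t}$ times (a constant multiple of) $K_{2t}(x,y)$, and the same conclusion via $\|\tau_x(k_{2t})\|_{1,k}=\|k_{2t}\|_{1,k}=\const$ and the bound $e^{-s/2}\le c_\delta(1+s)^{-\delta}$. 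If anything, you fill in details the paper leaves implicit, namely the convex-combination proof of the orbit-distance inequality and the explicit factor $2^{\gamma_k+d/2}$.
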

\begin{proof}
  Let us note that for all $\eta\in co(G.x)$
$$\min_{g\in G}| y-g.x|\leq |x|^2+|y|^2-2\langle y,\eta\rangle\leq \max_{g\in G}| y-g.x|.$$
 Then we have
\begin{eqnarray*}
K_t(x,y)=\tau_x(k_t)(-y)&=&t^{-\gamma_k-d/2}\int_{\mathbb{R}^d}e^{(|x|^2+|y|^2-2\langle y,\eta\rangle)/t}d\nu_x(\eta)
 \\&\leq & e^{(-\min_{g\in G}| y-g.x|^2)/2t}\tau _{x}(k_{2t})(-y)
\end{eqnarray*}
and
\begin{eqnarray*}
 \int_{\min_{g\in G}| y-g.x| > r }K_t(x,y)w_k(y)dy&\leq& c e^{-r^2/t}\|\tau _{x}(k_{2t})\|_{1,k}= c \|k_{2t}\|_{1,k}e^{-r^2/t}=ce^{-r^2/t}
 \\&\leq & c (1+r^2t^{-1})^{-\delta}
 \end{eqnarray*}
 which is the desired inequality.
\end{proof}
\begin{lem}\label{wt}
  Let T be a bounded operator on $L^2(\mathbb{R}^d,w_k(x)dx )$ . Suppose that $(T_n)_n$ is a sequence of
bounded operators satisfy the condition: there exists $c>0$ such that for each $f \in L^2(\mathbb{R}^d,w_k(x)dx )\cap L^1(\mathbb{R}^d,w_k(x)dx ) $   and $\lambda>0$
$$   \mu_k\{x\in \mathbb{R}^d,\;\;  T_n(f)(x)> \lambda\}\leq c\; \frac{\|f\|_{k,1}}{\lambda},\quad n\in \mathbb{N}$$
 where the measure $d\mu_x=w_k(x)dx$.
Assume that for each $f \in L^2(\mathbb{R}^d,w_k(x)dx )\cap L^1(\mathbb{R}^d,w_k(x)dx ) $ there is a subsequence $(T_{n_j})_j$  such that
 \begin{equation}\label{nj}
    T(f)(x)=\lim_{j\rightarrow\infty} T_{n_j}(f)(x); \quad a.e.\; x\in \mathbb{R}^d .
 \end{equation}
Then $T$ is of weak type $(1, 1)$, i.e., there exists $c>0$ such that
$$   \mu_k\{x\in \mathbb{R}^d,\;\;  T(f)(x)> \lambda\}\leq c\; \frac{\|f\|_{k,1}}{\lambda}$$
for each $f \in L^2(\mathbb{R}^d,w_k(x)dx )\cap L^1(\mathbb{R}^d,w_k(x)dx ) $   and $\lambda>0$.
\end{lem}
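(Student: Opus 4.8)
The plan is to obtain the weak-type $(1,1)$ bound for $T$ by a standard limiting argument, transferring the uniform bound on the $T_n$ to the limit operator through the almost-everywhere convergence along the subsequence together with Fatou's lemma for sets. Fix $f\in L^2(\mathbb{R}^d,w_k(x)dx)\cap L^1(\mathbb{R}^d,w_k(x)dx)$ and $\lambda>0$, and introduce an auxiliary level $0<\lambda'<\lambda$. Let $(T_{n_j})_j$ be the subsequence supplied by hypothesis (\ref{nj}), so that $T_{n_j}(f)(x)\to T(f)(x)$ for almost every $x$; in particular $T(f)$, being an a.e.\ limit of measurable functions, is measurable and its super-level sets make sense.

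First I would absorb the strict inequality defining the super-level set of the limit into the slightly lower level $\lambda'$ for the approximants. Indeed, if $x$ is a point of convergence with $T(f)(x)>\lambda$, then since $T_{n_j}(f)(x)\to T(f)(x)>\lambda>\lambda'$ we have $T_{n_j}(f)(x)>\lambda'$ for all $j$ large enough. Writing $A_j=\{x\in\mathbb{R}^d:\;T_{n_j}(f)(x)>\lambda'\}$, this shows that, up to a set of $\mu_k$-measure zero,
$$\{x\in\mathbb{R}^d:\;T(f)(x)>\lambda\}\subset \liminf_{j\to\infty}A_j.$$

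Next I would apply Fatou's lemma for sets, namely $\mu_k\big(\liminf_j A_j\big)\le \liminf_j \mu_k(A_j)$, which is simply Fatou's lemma applied to the indicator functions using $\chi_{\liminf_j A_j}=\liminf_j \chi_{A_j}$, and then invoke the uniform weak-type hypothesis on each $T_{n_j}$ at the level $\lambda'$. This yields
$$\mu_k\{x:\;T(f)(x)>\lambda\}\le \liminf_{j\to\infty}\mu_k(A_j)\le \liminf_{j\to\infty}\,c\,\frac{\|f\|_{1,k}}{\lambda'}=c\,\frac{\|f\|_{1,k}}{\lambda'}.$$
Finally, letting $\lambda'\uparrow\lambda$ gives $\mu_k\{x:\;T(f)(x)>\lambda\}\le c\,\|f\|_{1,k}/\lambda$, which is precisely the asserted weak-type $(1,1)$ inequality, with the same constant $c$.

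The one delicate point, and the step I would write out most carefully, is the set inclusion in the second paragraph: it is essential to lower the threshold from $\lambda$ to $\lambda'$ \emph{before} passing to the limit, because a.e.\ convergence only guarantees that $T_{n_j}(f)$ eventually exceeds every level strictly below the limit value $T(f)(x)$, not the value itself. The subsequent passage $\lambda'\uparrow\lambda$ then recovers the sharp level without loss in the constant. Everything else is routine measure theory; no quantitative information about the Dunkl structure is needed here, only the $L^2$-boundedness of $T$ (guaranteeing the objects are well defined) and the uniform weak bounds on the $T_n$.
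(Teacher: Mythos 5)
Your proof is correct and is essentially the paper's own argument: both rest on the inclusion $\{x:\,T(f)(x)>\lambda\}\subset \liminf_{j}A_j$ up to a $\mu_k$-null set, followed by Fatou's lemma for sets (which the paper phrases equivalently via the increasing sets $C_j=\bigcap_{\ell\ge j}A_\ell$ and continuity of $\mu_k$ from below, together with $\mu_k(C_j)\le\mu_k(A_j)$). The only divergence is your auxiliary level $\lambda'<\lambda$, which is harmless but unnecessary: since the superlevel set is defined by the \emph{strict} inequality $T(f)(x)>\lambda$, the level $\lambda$ is itself strictly below the limit value at every point of that set, so a.e.\ convergence already forces $T_{n_j}(f)(x)>\lambda$ for all large $j$ and the inclusion holds at level $\lambda$ directly, exactly as the paper uses it.
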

\begin{proof}
  Let $f \in L^2(\mathbb{R}^d,w_k(x)dx )\cap L^1(\mathbb{R}^d,w_k(x)dx ) $   and $\lambda>0$. Put
$$A_j=\{x\in \mathbb{R}^d,\;\;  T_{n_j}(f)(x)> \lambda\}$$
and
$$C_j=\bigcap_{\ell\geq j}A_{n_\ell}.$$
 Then, we have $C_j\subset C_{j+1}$.
From (\ref{nj}) we see that
$$\mu_k(\{x\in \mathbb{R}^d,\;\;  T(f)(x)> \lambda\})\leq \mu_k\left( \bigcup_{j}C_j \right)=\lim_{j\rightarrow\infty}\mu_k\left( C_j \right)$$
 But,
 $$\mu_k(C_j) \leq  \mu_k(A_{n_j}) \leq c\; \frac{\|f\|_{k,1}}{\lambda}.$$
 Therefore
$$\mu_k(\{x\in \mathbb{R}^d,\;\;  T(f)(x)> \lambda\})\leq c\; \frac{\|f\|_{k,1}}{\lambda}$$
which is  the required inequality.
\end{proof}
 The  next lemma  show that the Euclidian $\mathbb{R}^d$ endowed with measure $\mu_k$ is a space of homogeneous type.
  Recall that a metric measure  space $(X,d, \varrho)$  is  said to be  of   homogeneous type    if $\varrho$ is a doubling measure on $X$, i.e.,
there exists $ c> 0$ such that  such that for all $x_0\in \mathbb{R}^d$ and $r>0$,
$$\varrho(B(x_0,2r))\leq c \varrho(B(x_0,r)).$$
Where $B(x_0,r)=\{x\in X,\; d(x,x_0)<r\}$.
\begin{lem}
 $\mu_k$ is a doubling measure.
\end{lem}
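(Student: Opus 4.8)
The plan is to reduce the doubling inequality to the two-sided volume estimate
\[
\mu_k(B(x_0,r))\;\approx\; r^{d}\prod_{\alpha\in R^+}\bigl(r+|\langle\alpha,x_0\rangle|\bigr)^{2k(\alpha)},
\]
valid for all $x_0\in\mathbb{R}^d$ and $r>0$ with implied constants depending only on the root system $R$, the multiplicity $k$ and the dimension $d$. Once this is available, doubling is immediate: applying the upper bound at radius $2r$ and the lower bound at radius $r$, and using $2r+s\le 2(r+s)$ for the factor coming from each root, one obtains $\mu_k(B(x_0,2r))\le C\,2^{\,d+2\gamma_k}\,\mu_k(B(x_0,r))$, where $2\gamma_k=\sum_{\alpha\in R^+}2k(\alpha)$ is the homogeneity degree of $w_k$. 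The upper bound in the displayed estimate is elementary: for $y\in B(x_0,r)$ one has $|\langle\alpha,y\rangle|\le|\langle\alpha,x_0\rangle|+|\alpha|\,r\le C_R\bigl(r+|\langle\alpha,x_0\rangle|\bigr)$ because $R$ is finite, whence $w_k(y)\le C\prod_{\alpha\in R^+}(r+|\langle\alpha,x_0\rangle|)^{2k(\alpha)}$ throughout the ball; integrating over $B(x_0,r)$, whose Lebesgue measure is $\asymp r^{d}$, gives the claim.

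The heart of the matter, and the step I expect to be the main obstacle, is the lower bound, since it requires producing a region inside the ball on which $w_k$ is \emph{simultaneously} large relative to every root. My plan is to split the roots at the scale $r$ into the ``far'' family $F=\{\alpha:|\langle\alpha,x_0\rangle|\ge r\}$ and the ``near'' family $N=\{\alpha:|\langle\alpha,x_0\rangle|<r\}$, and to look for a good sub-region
\[
\Omega=\{\,y\in B(x_0,\delta r)\ :\ |\langle\alpha,y\rangle|\ge\eps\, r\ \text{ for all }\alpha\in N\,\}
\]
for suitably small constants $\delta,\eps>0$ depending only on $R$ and $d$. For a far root, if $\delta\le(2\max_\alpha|\alpha|)^{-1}$ then every $y\in B(x_0,\delta r)$ satisfies $|\langle\alpha,y\rangle|\ge|\langle\alpha,x_0\rangle|-|\alpha|\delta r\ge\tfrac12|\langle\alpha,x_0\rangle|\ge\tfrac14(r+|\langle\alpha,x_0\rangle|)$; for a near root, $r+|\langle\alpha,x_0\rangle|\le 2r$, so the defining condition $|\langle\alpha,y\rangle|\ge\eps r$ already yields $|\langle\alpha,y\rangle|\ge\tfrac{\eps}{2}(r+|\langle\alpha,x_0\rangle|)$. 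Hence $w_k(y)\gtrsim\prod_{\alpha\in R^+}(r+|\langle\alpha,x_0\rangle|)^{2k(\alpha)}$ on all of $\Omega$.

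It remains to check that $\Omega$ carries a definite proportion of the ball, i.e. that $|\Omega|\gtrsim r^{d}$, and this is where a pigeonhole/measure argument enters. The set removed from $B(x_0,\delta r)$ lies in the union over $\alpha\in N$ of the slabs $\{|\langle\alpha,y\rangle|<\eps r\}$, each meeting $B(x_0,\delta r)$ in Lebesgue measure $\lesssim(\eps r/|\alpha|)(\delta r)^{d-1}$. Summing over the at most $|R^+|$ near roots and using that $|\alpha|$ is bounded below, the removed measure is $\le C|R^+|\,\eps\,\delta^{d-1}r^{d}$, which is smaller than half of $|B(x_0,\delta r)|\asymp(\delta r)^{d}$ once $\eps$ is chosen small relative to $\delta$ and the root data. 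Thus $|\Omega|\ge\tfrac12|B(x_0,\delta r)|\gtrsim r^{d}$, and integrating the pointwise lower bound over $\Omega$ gives $\mu_k(B(x_0,r))\ge\int_\Omega w_k\gtrsim r^{d}\prod_{\alpha\in R^+}(r+|\langle\alpha,x_0\rangle|)^{2k(\alpha)}$. The only genuinely delicate point is the simultaneity built into this lower bound, which the slab-measure estimate resolves; everything else is bookkeeping with the finitely many roots and the homogeneity $w_k(\lambda y)=\lambda^{2\gamma_k}w_k(y)$.
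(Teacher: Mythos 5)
Your proof is correct, but it takes a genuinely different route from the paper's. The paper deduces doubling from Muckenhaupt theory: writing $w_k=\bigl(\prod_{\alpha\in R^+}|P_\alpha|^{2Nk(\alpha)}\bigr)^{1/N}$ with $P_\alpha(x)=\langle x,\alpha\rangle$ and $N=|R^+|$, it invokes the fact (Stein, Ch.~V) that $|P|^a\in A_p$ for a degree-$\ell$ polynomial whenever $-1<\ell a<p-1$, together with the stability of $A_p$ under geometric means, to get $w_k\in A_p$ for $p>2N\gamma_k+1$; doubling then follows because every $A_p$ weight induces a doubling measure. You instead prove directly the two-sided volume estimate $\mu_k(B(x_0,r))\approx r^d\prod_{\alpha\in R^+}\bigl(r+|\langle\alpha,x_0\rangle|\bigr)^{2k(\alpha)}$ and read off doubling from it. Your argument is sound: the upper bound is the trivial pointwise estimate; for the lower bound the far/near splitting of roots at scale $r$ works as you say (for far roots the choice $\delta\le(2\max_\alpha|\alpha|)^{-1}$ together with $|\langle\alpha,x_0\rangle|\ge r$ gives $|\langle\alpha,y\rangle|\ge\tfrac14(r+|\langle\alpha,x_0\rangle|)$, and for near roots the constraint defining $\Omega$ suffices since $r+|\langle\alpha,x_0\rangle|<2r$), and the slab-measure count correctly shows $|\Omega|\ge\tfrac12|B(x_0,\delta r)|$ once $\varepsilon$ is small relative to $\delta$ and $|R^+|$; nonnegativity of $k$ is used implicitly so that the exponents $2k(\alpha)$ preserve the inequalities. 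As for what each approach buys: the paper's proof is two lines modulo the cited facts and yields the strictly stronger conclusion $w_k\in A_p$ (which carries further consequences, e.g.\ weighted maximal-function bounds), whereas yours is self-contained, gives an explicit doubling constant of the form $C\,2^{d+2\gamma_k}$, and establishes along the way the sharp ball-volume asymptotics for $\mu_k$, a standard and independently useful fact in Dunkl analysis (it identifies $d+2\gamma_k$ as the homogeneous dimension of $(\mathbb{R}^d,|\cdot|,\mu_k)$, consistent with the dilation factor $n^{-2\gamma_k-d}$ the paper uses for its mollifiers).
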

 \begin{proof}
   It is enough to check that the weight $w_k$ belongs to a Muckenhaupt class $A_p$ for some $p > 1$.
Indeed, it is known (\cite{Stein}, Ch V, 6.5) that when $P$ is a polynomial on $R^d$  having degree $\ell$ then $|P|^a$
belongs to $A_p$ whenever $-1<\ell a<p-1$. Applying this fact to the polynomial $P_\alpha(x) =  \langle x,\alpha\rangle$
for $\alpha\in R^+$ and taking $p > 2N\gamma_k + 1 $ where $N$ is the cardinality of $R^+$, we see that  $|P_\alpha|^{2Nk(\alpha)}\in A_p$.
Then according to (\cite{Stein}, Ch V, 6.1) and the fact that
$$w_k=\left(\prod_{\alpha\in R^+}|P_\alpha|^{2Nk(\alpha)}\right)^{\frac{1}{N}}$$
we obtain, with this choice of $p$, that $w_k \in A_p$.
 \end{proof}
We are now able to prove Theorem \ref{mth}.
\begin{proof}[\textbf{Proof of the Theorem \ref{mth}}]  Let $\xi\in H^\infty(S_\mu^o)$, $\mu>0$.
it suffices by Marcinkiewicz interpolation and duality  to prove that $\xi(H_k)$ is a weak-type $(1,1)$. Before  beginning  we note first that in view of the convergence theorem \ref{cvth} and lemma \ref{wt} one can assume that $\xi\in \Psi(S_\mu^o)$.
  \par Let $f\in L^2(\mathbb{R}^d), w_k(x)dx)\cap L^1(\mathbb{R}^d), w_k(x)dx) $ and $\lambda>0$. From the Calderon-Zygmund decomposition, there exist
a functions g and $f_j$  and balls $B_j=B(x_j , r_j)$ such that
  \begin{itemize}
    \item[(i)] $f=g+h$ with $h= \sum_j f_j$,
     \item[(ii)] $\|g\|_\infty \leq c \lambda $,
     \item[(iii)]  $supp(f_j)\subset B_j$, and
 \item[(iv)] $\|f_j\|_{1,k}\leq c \mu_k(B_j)$,
 \item[(v)]  $\sum_j\mu_k(B_j)\leq \dfrac{c}{\lambda}\;\|f\|_{1,k}$,
        \item[(vi)]    Each point of $\mathbb{R}^d$ is contained in at most a finite number  $M$ of the
balls $B_j$.
\end{itemize}
Note that (iv) and (v) imply that $\|h\|_{1,k}\leq c\|f\|_{1,k}$ . Hence
\begin{equation}\label{g}
 \|g\|_{1,k}\leq (1+c)\|f\|_{1,k}.
\end{equation}
\par Let  $S_t=e^{-tH_k}$, we split $h= h_1+h_2$ with
$$h_1= \sum_j S_{t_j}(f_j); \quad h_2= h-h_1=\sum_j(I-S_{t_j})f_j$$
where $t_j=r_j^2$. Then
\begin{eqnarray}
 \nonumber\mu_k\{x;\; |\xi(H_k)(f)(x)|>\lambda\} &\leq& \mu_k\left\{x;\; |\xi(H_k)(g)(x)|>\frac{\lambda}{3}\right\}
+\mu_k\left\{x;\; |\xi(H_k)(h_1)(x)|>\frac{\lambda}{3}\right\}
 \\&&+\mu_k\left\{x;\; |\xi(H_k)(h_2)(x)|>\frac{\lambda}{3}\right\}\label{T}
 \end{eqnarray}
 For the first term of the right hand side of (\ref{T}) we use  the $L^2$-boundedness of $\xi(H_k) $,  (\ref{g}) and $(ii)$ to get
$$\mu_k\left\{x;\;  \xi(H_k)(g)>\frac{\lambda}{3}\right\}\leq \frac{9}{\lambda^2}\|\xi(H_k)(g)\|_{2,k}\leq \frac{c}{\lambda^2}\|g\|_{2,k}\leq
\frac{c}{\lambda^2}\|g\|_{1,k}\|g\|_{\infty}\leq \frac{c}{\lambda}\|f\|_{1,k}.$$
For the second term    we have
$$\mu_k\left\{x;\; |\xi(H_k)(h_1)(x)|>\frac{\lambda}{3}\right\}\leq \frac{9}{\lambda^2}\|\xi(H_k)(h_1)\|_{2,k}\leq \frac{9}{\lambda^2} \sum_j\| S_{t_j}(f_j)\|_{2,k}^2$$
Using (\ref{HK}) and Lemma \ref{l2} we get
\begin{eqnarray*}
|S_{t_j}(f_j)(x)|&\leq&   = \int_{\mathbb{R}^d}\tau_x(k_{t_j})(-y)|f_j(y)|w_k(y)dy
\\&\leq&  \sup_{y\in B(z,\sqrt{t_j})}\tau_x(k_{t_j})(-y)\|f_j\|_{1,k}
\\ &\leq& c  \lambda \mu(B_j)\inf_{y\in B(z,\sqrt{t_j})}\tau_x(k_{2t_j})(-y)
\\ &\leq& c\lambda\int_{\mathbb{R}^d}\tau_x(k_{2t_j})(-y) \chi_{B_j}w_k(y)dy
\end{eqnarray*}
 and in view of Lemma \ref{Mk} it follows that for any  $\varphi\in L^2(\mathbb{R}^d,w_k(x)dx)$,
$$|\langle S_{t_j}(f_j),\varphi\rangle_{k}|\leq c\lambda \langle k_{2t_j}*_k|\varphi|,  \chi_{B_j}\rangle_{k}\leq  c\lambda \langle M_k|\varphi|,  \chi_{B_j}\rangle_{k}.$$
Thus  the  $L^2$-boundedness of $M_k$ yield
\begin{eqnarray*}
\left\|\sum_jS_{t_j}(f_j)\right\|_{2,k}&=&\sup\left\{ \left|\left\langle \sum_jS_{t_j}(f_j),\varphi\right\rangle_{k}\right|,\; \|\varphi\|_{2,k}\leq 1\right\}
\\&\leq& c\lambda \sup\left\{\left \langle  M_k|\varphi|,\sum_j \chi_{B_j}\right\rangle_{k},\; \|\varphi\|_{2,k}\leq 1\right\}
\\&\leq& c\lambda\left\| \sum_j\chi_{B_j}\right\|_{2,k}.
\end{eqnarray*}
We now use properties (vi)    of the Calderon-Zygmund decomposition
  to obtain the estimate
$$ \left\| \sum_j\chi_{B_j}\right\|_{2,k}\leq  M \left(\sum_j\mu(B_j)\right)^{\frac{1}{2}}. $$
Thus in view (v)
$$\left\|\sum_jS_{t_j}(f_j)\right\|_{2,k}^2\leq c\lambda \|f\|_{1,k}$$
and we conclude  that
$$\mu_k\left\{x;\; |\xi(H_k)(h_1)(x)|>\frac{\lambda}{3}\right\}\leq \frac{c}{\lambda}\|f\|_{1,k}.$$
Now  consider the  third term of the right hand side of (\ref{T}). Putting
$$\mathcal{B}_j= \bigcup_{g\in G}g\big(B(y_j,2r_j)\Big)=\bigcup_{g\in G}B(gy_j,2r_j)$$
 and  write
 $$\mu_k\left\{x;\;  |\xi(H_k)(h_2)(x)|>\frac{\lambda}{3}\right\}\leq \sum_j\mu_k (\mathcal{B}_j)+\mu_k\left\{x\notin\bigcup_{ j}\mathcal{B}_j ;\;|\xi(H_k)(h_2)(x)|>\frac{\lambda}{3}\right\}.$$
 By the doubling property of the measure $\mu_k$ and $(iii)$ we have
 $$\sum_j\mu_k ( \mathcal{B}_j ) \leq |G|\sum_j\mu_k ( B(y_j,2r_j)) \leq |G|\sum_j\mu_k ( B_j)\leq  \frac{c}{\lambda}\|f\|_{1,k},$$
since $w_k$ is a $G$-invariant function. So it   remains  to prove that
\begin{equation}\label{bbb}
  \mu_k\left\{x\notin\bigcup_{ j}\mathcal{B}_j ;\;|\xi(H_k)(h_2)(x)|>\frac{\lambda}{3}\right\}\leq
\frac{c}{\lambda}\;\|f\|_{1,k}.
\end{equation}
As in \cite{DR}  define   $\xi_j(v)=\xi(v)(1-e^{-t_jv})$ and write
$$\xi(H_k)(h_2)=\sum_j\xi_j(H_k)f_j.$$
We  represent the operator $\xi_j(H_k)$ by
$$\xi_j(H_k)=\frac{1}{2i\pi}\int_{\gamma}\xi_j(v)(H_k-vI)^{-1}dv=\xi_j^+(H_k)+\xi_j^-(H_k)$$
where
$$\xi_j^{\pm}(H_k)=\frac{1}{2i\pi}\int_{\gamma^{\pm}}\xi_j(v)(H_k-vI)^{-1}dv$$
and  the contour  $\gamma=\gamma^+\cup \gamma^-$ with $\gamma^+(t)=te^{i\theta}$ for $t\geq 0$ ,   $\gamma^-(t)=-te^{i\theta}$ for  $t< 0$ and whith $0<\theta<\pi/2$. Consider $\xi_j^{+}(H_k)$, for  $ v\in \gamma^+$ we substitute
$$(H_k-vI)^{-1}=\int_{\Gamma^+} e^{vz}e^{-zH_k}dz$$
where the curve $\Gamma^+$  is defined by  $\Gamma^+=te^{i\beta}$ with $\pi/2-\theta<\beta<\pi/2$.  It follows that
$$\xi_j^{+}(H_k)= \int_{\Gamma^+} n^+(z)e^{-zH_k} dz,$$
where
$$n^+(z)=\int_{\gamma^+}\xi_j(v)e^{vz} dv.$$
 Here we have used Fubini's theorem to change the order of integration. Therefore we have
$$ $$
$\displaystyle{ \int_{x\in \mathcal{B}_j^c} |\xi_j^{+}(H_k)(f_j)(x)|w_k(x)dx}$
\begin{eqnarray*}
&\leq& c \int_0^{+\infty} \int_0^{+\infty}
|1-e^{-t_jv}|\;e^{-|v||z|\sigma}\\&&\left(\int_{\mathbb{R}^d}\int_{x\in \mathcal{B}_j^c}|\mathcal{H}_z(x,y)||f_j(y)|w_k(x)w_k(y)dxdy\right)d|v|d|z|
\end{eqnarray*}
where $v=|v|e^{i\theta}$ and $\sigma=\cos(\theta+\beta) $.
Let us noting that $x\in \mathcal{B}_j^c$ is equivalent to the condition
$$\min_{g\in G}|x-gy|>2r_j.$$
Then using Proposition \ref{prop1} and Lemma \ref{lem1}
\begin{eqnarray*}
 \int_{x\in\mathcal{B}_j^c}|\mathcal{H}_z(x,y)|w_k(x)dx &\leq&  \int_{\min_{g\in G}|x-gy|>2r_j}K_{Re(z)}(cx,cy)w_k(x)dx
\\&=&c^{2\gamma_k} \int_{\min_{g\in G}|x-g.cy|>2r_j}K_{Re(z)}(x,cy)w_k(x)dx
\\&\leq& C \left(1+\frac{r_j}{Re(z)}\right)^{-\delta}\leq C \left(1+\frac{r_j}{|z|}\right)^{-\delta},
\end{eqnarray*}
from which it follows that $$ $$
$\displaystyle{ \int_{x\in \mathcal{B}_j^c} |\xi_j^{+}(H_k)(f_j)(x)|w_k(x)dx}$
\begin{eqnarray*}
&\leq& C\|f_j\|_{1,k} \int_0^{+\infty} \int_0^{+\infty}
|1-e^{-t_jv}|\;e^{-|v||z|\sigma }\left(1+ r_j |z|^{-1} \right)^{-\delta} d|v|d|z|.
\end{eqnarray*}
This integral is treated in \cite{DR} by  splitting  it
into two parts, $I_1$ and $I_2$, corresponding to integration over  $t_j|v|>1$ and  $t_j|v|\leq1$ and gives
$$\int_{x\in \mathcal{B}_j^c} |\xi_j^{+}(H_k)(f_j)(x)|w_k(x)dx\leq C\|f_j\|_{1,k}. $$
 We also obtain similar estimates for $\xi_j^{-}(H_k)$ by the same argument. Therefore
\begin{eqnarray*}
 \mu_k\left\{x\notin\bigcup_{ j}\mathcal{B}_j ;\;|\xi(H_k)(h_2)(x)|>\frac{\lambda}{3}\right\}&\leq & \frac{3}{\lambda}
\sum_j\int_{x\notin \cup_i\mathcal{B}_i } |\xi_j (H_k)(f_j)(x)|w_k(x)dx
\\&\leq& \frac{3}{\lambda}\sum_j\int_{x\notin \mathcal{B}_j } |\xi_j (H_k)(f_j)(x)|w_k(x)dx\\
&\leq& c\sum_j\|f\|_{1,k}\leq \frac{c}{\lambda}\;\|f\|_{1,k}.
\end{eqnarray*}
 This archives the proof   of the weak type estimates $(1,1)$ for $\xi(H_k)$.
 \end{proof}

  \end{document}